\documentclass[11pt]{amsart}

\usepackage{a4}
\usepackage{amsmath, amssymb, amsthm, mathtools}
\usepackage{xfrac} 
\usepackage[centering]{geometry}
\usepackage{commath}
\usepackage{tikz}
\usepackage{amsmath}
\usetikzlibrary{automata,shapes,arrows,positioning,chains}
\usepackage{graphicx, color}
\usepackage{hyperref}
\usepackage{enumerate}
\usepackage[mathscr]{euscript}
\usepackage{mathrsfs}
\usepackage{pst-node}
\usepackage{tikz-cd}
\usetikzlibrary{shapes.geometric, arrows}
\usetikzlibrary{patterns}
\usepackage{contour}
\contourlength{1.5pt}
\tikzset{mynode/.style args={#1 | #2}{midway,%
node contents={\contour{white}{\textcolor{#1}{#2}}},%
font=\tiny,inner sep=0pt}}
\tikzstyle{line} = [draw, -latex']
\tikzstyle{start} = [rectangle, rounded corners, minimum width=1cm, minimum height=1cm,text centered, draw=black]
\tikzstyle{arrow} = [thick,->,>=stealth]
\usepackage{bbm}
\DeclareMathOperator{\var}{var}
\DeclareMathOperator{\Per}{Per}
\DeclareMathOperator{\Image}{Im}

\begin{document}
 \newtheorem{theorem}{Theorem}
 \newtheorem{lemma}[theorem]{Lemma}
 \newtheorem{corollary}[theorem]{Corollary}
 \newtheorem{problem}[theorem]{Problem}
 \newtheorem{conjecture}[theorem]{Conjecture}
 \newtheorem{definition}[theorem]{Definition}
 \newtheorem{prop}[theorem]{Proposition}
 \numberwithin{equation}{section}
 \numberwithin{theorem}{section}
 
 \newcommand{\ms}{\mathscr}
 \newcommand{\Z}{\mathbb{Z}}
 \newcommand{\vphi}{\varphi}
 \newcommand{\Ps}{\varphi}
 
 \date{\today}

\parskip=0.5ex

\title{$\overline{d}$-continuity for Countable State Shifts } 
\author{Jasmine Bhullar}
\thanks{The author was partially supported by NSF DMS-1554794 and DMS-2154378.}
\email{jbhullar@uh.edu}
\address{Department of Mathematics, University of Houston}
\allowdisplaybreaks

\topmargin 0in 
\headsep 0.4in
\textheight 8in 
\textwidth 5.95in

\maketitle

\begin{abstract}
For full shifts on finite alphabets, Coelho and Quas showed that the map that sends a Hölder continuous potential $\phi$ to its equilibrium state $\mu_\phi$ is $\overline{d}$-continuous. We extend this result to the setting of full shifts on countable (infinite) alphabets. As part of the proof, we show that the map that sends a strongly positive recurrent potential to its normalization is continuous for potentials on mixing countable state markov shifts.
\end{abstract}
\section{Introduction}\label{sec:intro}

The concept of $\overline{d}$-distance on the space of invariant measures on a shift space was introduced by Ornstein to study the isomorphism problem for Bernoulli shifts. Ornstein showed that entropy is a complete invariant for the class of Bernoulli shifts: two Bernoulli shifts are isomorphic if and only if they have the same entropy \cite{Orn73, Orn74}. A key tool in this proof was the $\overline{d}$-metric. Many ergodic properties are well-behaved with respect to this metric. The entropy function $\mu \mapsto h_{\mu}(\sigma)$ is $\overline{d}$-continuous. Moreover, the set of processes that are isomorphic to Bernoulli shifts is $\overline{d}$-closed. The more familiar topology on the space of Borel probability measures is the weak*-topology. The topology coming from the $\overline{d}$-metric refines the weak*-topology. 

If $S$ is a countable set, the full shift with alphabet $S$ is the space of all (one-sided or two-sided) sequences with symbols coming from $S$, together with the left shift map $\sigma$. For Markov shifts on finite alphabets, potentials with summable variations have a unique equilibrium state \cite{Wal75}. One can quickly prove that the map that sends a potential to its unique equilibrium state is continuous in the weak*-topology. For full shifts on a finite alphabet, Coelho and Quas proved that the map that sends a potential $\phi$ to its unique equilibrium state $\mu_\phi$ is continuous with respect to the $\overline{d}$-metric on the space of shift-invariant probability measures and a suitable metric on the space of potentials \cite{CQ98}. 

A natural question that arises is whether a similar result holds if we take the alphabet to be countably infinite. In this paper, all shift spaces that we consider are one-sided. We use thermodynamic formalism for countable-state Markov shifts (CMS) developed by Sarig \cite{Sar99, Sar01, BS03}. In this case, the classical notion of topological pressure must be replaced by the \emph{Gurevich pressure} $P_G(\phi)$, where the partition function is a sum taken only over those periodic orbits that begin in a ﬁxed 1-cylinder. 

A useful tool in studying equilibrium states is the RPF operator. For every $\phi$ with summable variations,  the Ruelle operator $L_{\phi}$ is defined as \[L_\phi(f)(x)= \sum_{\sigma y=x} e^{\phi(y)} f(y)\]
When $L_{\phi}(1)=1$, we call the potential `normalized'.  The Generalized Ruelle Perron Frobenius (RPF) theorem proved in \cite{Sar01} relates the modes of recurrence of potentials to the eigendata of the RPF operator corresponding to the eigenvalue $\lambda=e^{P_G(\phi)}$. Some important results relating to the thermodynamic formalism for CMS are summarized in sections \ref{setting} and \ref{TF for CMS}. When $\phi$ is `positive recurrent', the eigendata of the RPF operator and its dual give rise to an important invariant probability measure corresponding to the potential, called the RPF measure. For getting a result similar to the one given by Coelho and Quas for the countable-state case, we need to put an even stronger condition on our potential, namely `strong positive recurrence'.

\begin{theorem} \label{Potential and equilibrium states}
Let $\Sigma$ be the full shift on a countably infinite alphabet. Let $0<\theta<1$ and let $A_\theta$ be the set of $\theta-$weakly Hölder continuous strongly positive recurrent potentials with finite Gurevich presssure. Let $d_\theta$ be the metric associated to the Hölder norm on $A_\theta$. Let $\phi$ be a potential in $A_{\theta}$. Then for given $\epsilon>0$, there exists $\delta>0$ such that $\overline{d}(m_{\phi}, m_{\tau})< \epsilon$ whenever $d_{\theta}(\phi,\tau)< \delta$, where $\tau \in A_{\theta}$ and $m_{\phi}$ and $m_{\tau}$ are the corresponding RPF measures.
\end{theorem}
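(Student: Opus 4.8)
The plan is to reduce the problem to normalized potentials and then to build an explicit stationary coupling (joining) of the two RPF measures whose coordinatewise disagreement rate is controlled by the $\theta$-distance between the potentials; this rate is an upper bound for $\overline{d}(m_\phi,m_\tau)$.

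First I would invoke the normalization-continuity result quoted above. Replacing $\phi$ and $\tau\in A_\theta$ by their normalizations $\tilde\phi$ and $\tilde\tau$ alters each potential only by an additive coboundary $\log h-\log h\circ\sigma$ and by its Gurevich pressure, so the RPF (equilibrium) measures are unchanged: $m_{\tilde\phi}=m_\phi$ and $m_{\tilde\tau}=m_\tau$. By continuity of normalization, $d_\theta(\phi,\tau)\to0$ forces $d_\theta(\tilde\phi,\tilde\tau)\to0$, and in particular the sup-norm and the variations $V_n$ of the difference $\tilde\phi-\tilde\tau$ become small. Hence it suffices to prove the estimate for two normalized, uniformly weakly Hölder potentials whose difference is small in the $\theta$-norm. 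For a normalized potential $\tilde\phi$ (so $L_{\tilde\phi}1=1$) the RPF measure is the associated $g$-measure, with one-step conditional distributions $g_{\tilde\phi}(a\mid z)=e^{\tilde\phi(az)}$ over the alphabet, the key fact being $\sum_a e^{\tilde\phi(az)}=1$ for every $z$.

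The single ingredient that makes the countable-alphabet case work is a uniform total-variation bound extracted from this normalization identity. Writing $e^{\tilde\phi(az)}-e^{\tilde\tau(az)}=e^{\tilde\phi(az)}\bigl(1-e^{\tilde\tau(az)-\tilde\phi(az)}\bigr)$ and summing over the infinite alphabet gives
\[
\sum_a\bigl|e^{\tilde\phi(az)}-e^{\tilde\tau(az)}\bigr|\le\bigl(e^{\|\tilde\phi-\tilde\tau\|_\infty}-1\bigr)\sum_a e^{\tilde\phi(az)}=e^{\|\tilde\phi-\tilde\tau\|_\infty}-1=:\eta,
\]
uniformly in $z$, so the two one-step distributions differ by at most $\eta$ in total variation, with $\eta$ small. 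The same device controls memory loss: if $z,z'$ agree in their first $n$ coordinates then $az,az'$ agree in their first $n+1$, whence $\sum_a|e^{\tilde\tau(az)}-e^{\tilde\tau(az')}|\le e^{V_{n+1}(\tilde\tau)}-1$, which decays geometrically since $\tilde\tau$ is weakly Hölder. Both estimates use only the normalization sum together with the variations $V_m$ for $m\ge2$, so the possibly infinite one-symbol variation $V_1$ never enters. I would then pass to the two-sided natural extensions (the $\overline{d}$-distance depends only on the stationary process) and construct a joining by coupling from the distant future backward: at each coordinate, given the already-coupled future, the new pair of symbols is drawn from a maximal coupling of the two one-step distributions. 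Following the Coelho--Quas scheme one tracks the length $L$ of the block on which the two coordinate-sequences currently agree; by the two estimates, this block grows by one at each step except with probability at most $\eta+\bigl(e^{V_{L+1}}-1\bigr)$, and a failure only shortens it. This makes the block length a positive-recurrent chain whose stationary law concentrates on long blocks as $\eta\to0$, so the disagreement probability $\lambda(X_0\neq Y_0)$—which dominates $\overline{d}(m_\phi,m_\tau)$—is $O(\eta)$, hence below $\epsilon$ once $d_\theta(\phi,\tau)$ is small enough.

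The main obstacle is precisely this last step: producing a genuinely shift-invariant joining and bounding its disagreement rate in the infinite-alphabet setting. A naive coordinate-by-coordinate coupling fails, because a disagreement at an adjacent coordinate would force $V_1$ into the estimate; the block-coalescence construction is what guarantees that whenever two new symbols are coupled the conditioning sequences already agree on a long block, so only the geometrically small tails $V_m$ $(m\ge2)$ and the potential gap $\eta$ ever contribute. Establishing positive recurrence of the agreement-block chain and the resulting $O(\eta)$ bound, uniformly over the infinite alphabet, is where the real work lies, and the normalization identity is used repeatedly to keep every sum over the alphabet convergent and uniformly controlled.
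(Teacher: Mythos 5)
Your overall route coincides with the paper's: Theorem \ref{Potential and equilibrium states} is obtained there by combining Theorem \ref{Potential and normalized potentials} (continuity of $\phi\mapsto\log g_\phi$) with Theorem \ref{g-functions and g-measures countable} ($\overline{d}$-continuity of $g\mapsto\nu_g$), and your two steps are exactly these; moreover your ``maximal coupling of the two one-step distributions'' is precisely the paper's kernel $G$, built from $F(i,x,y)=\min(g_\phi(ix),g_\tau(iy))$, and your agreement-block/renewal bound is the paper's return-time estimate. Two cautions on details. First, Theorem \ref{Potential and normalized potentials} gives only $\|\log g_\phi-\log g_\tau\|_\infty\to0$, not the $d_\theta$-convergence of the normalizations that you assert; fortunately your argument only uses the sup-norm smallness $\eta$ together with geometric decay of the variations of one normalized potential (which is uniform for $d_\theta(\phi,\tau)<\delta$), so the overstatement is harmless. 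Second, your claim that $V_1$ ``never enters'' is delicate: immediately after a disagreement the two conditioning sequences agree on at most one coordinate, so the very next coupling step is controlled by $V_1$ of a normalized potential, which for a merely weakly H\"older $\phi$ need not be finite; the paper's own renewal estimate has the same subtlety, so you should not advertise this as automatically resolved by the block construction.

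The genuine gap is the step you yourself defer as ``where the real work lies'': producing a shift-invariant joining that realizes the backward maximal coupling. Running the coupling ``from the distant future backward'' produces, for each $n$, the measure $(L^*_{\log G})^n(m_0)$ for an initialization $m_0$ with marginals $\nu_{g_\phi}$ and $\nu_{g_\tau}$; these measures are not shift-invariant, and since the alphabet is infinite, $\ms{M}(\Sigma\times\Sigma)$ is not weak*-compact, so one can neither invoke Schauder--Tychonoff (as Coelho--Quas do for finite alphabets) nor pass to a limit for free. This obstacle is exactly what the paper's proof is organized around: it shows that $L^*_{\log G}$ preserves the set of measures with marginals $\nu_{g_\phi}$ and $\nu_{g_\tau}$ (Lemma \ref{marginals} together with the commutation lemma), observes that this set is tight because its marginals are two fixed Borel measures on a Polish space (as in Lemma \ref{compactness}), forms the Ces\`aro averages $m_n=\frac{1}{n}\sum_{k=0}^{n-1}(L^*_{\log G})^k(\nu_{g_\phi}\times\nu_{g_\tau})$, extracts a weak* limit point by Prokhorov's theorem, and verifies that the limit is a fixed point of $L^*_{\log G}$, hence a $(\sigma\times\sigma)$-invariant joining on which the stationary renewal computation can legitimately be performed. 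Without this construction (or a genuine coupling-from-the-past convergence argument) there is no stationary measure under which ``the stationary law of the block chain'' is defined; with it, the estimates you derived are exactly the ones that finish the proof.
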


The notion of $g$-functions was introduced to ergodic theory in \cite{Kea72} and is a useful tool in thermodynamic formalism. Conformal measures for $g$-functions are called $g$-measures. Corresponding to a strongly positive recurrent potential $\phi$, we get a normalized potential, which is the logarithm of a $g$-function $g_{\phi}$. The RPF measure for $\phi$ is a $g$-measure for $g_{\phi}$. Theorem \ref{Potential and equilibrium states} follows as a corollary to Theorems \ref{g-functions and g-measures countable} and \ref{Potential and normalized potentials}.

The first result gives the continuous dependence of $g$-measures on their corresponding $g$-functions. 
\begin{theorem} \label{g-functions and g-measures countable}
Suppose $g$ is a continuous $g$-function on a full shift, $\Sigma$, on a countably infinite alphabet with the property that \begin{equation} \label{eqn:hypothesis}\sum_{n=r}^\infty \prod_{i=r}^n e^{-V_i(g)}= \infty, \end{equation} for some $r \geq 1$, where $V_n(g)$ is the n-th variation of $\log(g)$. Let $\nu_g$ be a $g$-measure. Then, for every $\epsilon > 0$, there exists $\delta>0$ such that if $h$ is a $g$-function on $\Sigma$ and $d(h,g):=\| \log(g)-\log(h)\|_{\infty} < \delta$, then for any $g$-measure $\nu_h$ corresponding to $h$, $\bar{d}(\nu_g,\nu_h) < \epsilon$. 
\end{theorem}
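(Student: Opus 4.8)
The plan is to bound $\bar{d}(\nu_g,\nu_h)$ by exhibiting a single shift-invariant joining of the two measures under which the zeroth coordinates disagree with small probability. For the per-symbol Hamming cost $\mathbbm{1}[x_0\neq y_0]$ one has $\bar{d}(\nu_g,\nu_h)=\inf_\lambda \lambda\{(x,y):x_0\neq y_0\}$, the infimum running over shift-invariant couplings $\lambda$ of $\nu_g$ and $\nu_h$, so it suffices to construct one good $\lambda$. I would generate it by \emph{coupling from the past}: a $g$-measure is stationary under the ``prepend'' dynamics in which, given the current sequence $x$ (in the role of the future), one adjoins a new leading symbol $a$ with probability $g(a,x)$. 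I run this dynamics simultaneously for $g$ and $h$, and at each step sample the pair of new leading symbols $(a,b)$ from a \emph{maximal coupling} of the conditional laws $g(\cdot,x)$ and $h(\cdot,y)$, so that $P(a\neq b)$ equals their total variation distance. Initializing on a common block and letting the number of prepended symbols tend to infinity yields a stationary joining $\lambda$, and by stationarity $\bar{d}(\nu_g,\nu_h)\le \lambda\{x_0\neq y_0\}$, which is the long-run frequency of steps at which the two prepended symbols disagree.

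Here the countable alphabet is handled by the multiplicative form of the hypothesis $d(h,g)<\delta$: since $\|\log g-\log h\|_\infty<\delta$ gives $e^{-\delta}g\le h\le e^\delta g$ pointwise, we get $\sum_c|g(c,y)-h(c,y)|\le(e^\delta-1)\sum_c g(c,y)=e^\delta-1$ even though there are infinitely many symbols. Likewise, if $x$ and $y$ agree on their first $k$ coordinates, then $g(\cdot,x)$ and $g(\cdot,y)$ are conditioned on sequences that first differ at index $k+1$, so $\sum_c|g(c,x)-g(c,y)|\le e^{V_{k+1}(g)}-1$. Combining, whenever the two running sequences agree on their $k$ most recently prepended symbols, the next prepended pair disagrees with probability at most $q_k:=\tfrac12(e^{V_{k+1}(g)}-1)+\tfrac12(e^\delta-1)$.

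The main step is a renewal analysis. I track the length $k$ of the agreeing front block: an agreement increments $k$, a disagreement resets it to $0$. The disagreement coordinates are exactly the resets, so the disagreement frequency equals the renewal rate $1/\mathbb{E}[\text{run}]$, where a run counts the consecutive agreements between resets. Since $q_k$ is a uniform bound, I can stochastically dominate the true process by a renewal process with survival probabilities $p_k=1-q_k$, giving $\mathbb{E}[\text{run}]\ge M(\delta):=\sum_{n\ge 1}\prod_{k=0}^{n-1}p_k$. For small variations $\tfrac12(e^{V_{k+1}}-1)\le V_{k+1}$, so $p_k\gtrsim e^{-V_{k+1}(g)}e^{-c\delta}$ and $\prod_{k=0}^{n-1}p_k\gtrsim e^{-cn\delta}\prod_{i=1}^{n}e^{-V_i(g)}$. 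By \eqref{eqn:hypothesis} the series $\sum_n\prod_{i=1}^n e^{-V_i(g)}$ diverges, so monotone convergence forces $M(\delta)\to\infty$ as $\delta\to0$; choosing $\delta$ with $1/M(\delta)<\epsilon$ yields $\bar{d}(\nu_g,\nu_h)\le 1/M(\delta)<\epsilon$.

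I expect the delicate point to be exactly this renewal estimate and the role of \eqref{eqn:hypothesis}: the lower bound $p_k\gtrsim e^{-V_{k+1}(g)}$ is only clean where $V_{k+1}$ is small, so one must isolate the tail (or group coordinates / pass to a power of the shift) on which the variations are controlled, absorbing the finitely many large-variation indices into constants, and then confirm that the indexing of the product matches \eqref{eqn:hypothesis} for the relevant $r$. Making the coupling-from-the-past limit and its stationarity rigorous, so that the frequency interpretation of $\lambda\{x_0\neq y_0\}$ is justified, is the remaining technical check.
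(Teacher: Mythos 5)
Your proposal follows the same architecture as the paper's proof: your ``maximal coupling of the conditional laws'' is exactly the paper's kernel $G$ (whose diagonal entries are $F(i,x,y)=\min(g(ix),h(iy))$, so the one-step disagreement probability is the total variation distance), and your renewal analysis is the paper's return-time computation for the $\mathbb{Z}$-valued process, ending with $\bar d(\nu_g,\nu_h)\le 1/\mathbb{E}(\tau)$ and divergence of the series. The first genuine gap is the construction of the stationary joining, which you defer to ``coupling from the past.'' Literal CFTP needs coalescence or monotonicity, which you do not have on a countable alphabet with no uniform minorization, and there is no reason the laws of your pair chain converge as the number of prepended symbols grows; moreover, on the non-compact space $\Sigma\times\Sigma$ you cannot extract weak* limits for free. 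This is precisely the obstruction that distinguishes the countable-alphabet case from Coelho--Quas, and the paper resolves it with a short but essential tightness argument: the set $P_{\nu_g,\nu_h}$ of couplings with marginals $\nu_g,\nu_h$ is tight (each single measure is tight on the Polish space $\Sigma$, and $(\Sigma\times\Sigma)\setminus(A\times B)\subset((\Sigma\setminus A)\times\Sigma)\cup(\Sigma\times(\Sigma\setminus B))$), hence weak*-precompact and closed; since $G$ has marginals $g$ and $h$, the operator $T=L_{\log G}^*$ preserves $P_{\nu_g,\nu_h}$, so the Ces\`aro averages $\frac1n\sum_{k=0}^{n-1}T^k(\nu_g\times\nu_h)$ have a weak* limit point, which is then shown to be a fixed point of $T$, i.e.\ a shift-invariant joining. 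You should replace CFTP by this Krylov--Bogolyubov-type argument; without it the frequency interpretation you rely on has no measure to refer to.

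The second gap is your treatment of large variations, and here your proposed repairs would actually fail. Your additive bound $q_k=\tfrac12(e^{V_{k+1}(g)}-1)+\tfrac12(e^\delta-1)$ is vacuous ($q_k\ge 1$) as soon as $V_{k+1}(g)\ge\ln 3$; then your lower bound $\prod_{j=0}^{n-1}p_j$ is nonpositive for every $n>k$, so $M(\delta)$ stays bounded by roughly $k+1$ uniformly in $\delta$, and the method can never certify $\bar d<\epsilon$ for $\epsilon<1/(k+1)$. One cannot ``absorb'' a nonpositive factor into a constant, and passing to a power of the shift does not shrink variations; note that hypothesis (\ref{eqn:hypothesis}) is perfectly compatible with $V_i(g)\ge\ln 3$ at one index, or even at a sufficiently sparse infinite set of indices, so this case cannot be excluded. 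The fix, which is what the paper does, is to keep the estimate multiplicative — the same trick you already used for the countable alphabet: if $x,y$ agree on the relevant front block, then termwise $h(cy)\ge e^{-\delta}g(cy)\ge e^{-(V(g)+\delta)}g(cx)$, so the agreement probability satisfies $\sum_c\min(g(cx),h(cy))\ge e^{-(V(g)+\delta)}$ with no smallness assumption on $V(g)$; this yields $\mathbb{E}(\tau)\ge\sum_n\bigl(\prod_i e^{-V_i(g)}\bigr)e^{-n\delta}$, which diverges as $\delta\to 0$ by monotone convergence (the paper invokes Abel's theorem) exactly under (\ref{eqn:hypothesis}). The only residue of your indexing worry is the case where some early variation is infinite (the reason the hypothesis reads ``for some $r\ge1$''); that case defeats the multiplicative bound at the first few steps as well, and is glossed over in the paper's own proof, so it is a fair point but not one your additive approach helps with.
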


Condition (\ref{eqn:hypothesis}) is satisfied whenever $\sum_{i=1}^{\infty} V_i(g) < \infty$; in particular, Theorem \ref{g-functions and g-measures countable} applies whenever $\log(g)$ is weakly Hölder continuous. As a corollary of the proof of Theorem \ref{g-functions and g-measures countable}, we get that in this case the map $g \mapsto \nu_g$ is locally Lipschitz with respect to the metrics $d$ and $\overline{d}$.

\begin{theorem} \label{g-functions and g-measures Lipschitz}
Suppose $g$ is a continuous $g$-function on a full shift,  $\Sigma$, on a countably infinite alphabet such that $\sum_{i=2}^{\infty} V_i(g) < \infty$, where $V_n$ is the n-th variation of $\log(g)$. Let $\nu_g$ be a $g$-measure. Then there exists a constant $A_g$ such that if $h$ is any $g$-function on $\Sigma$ with $d(h,g)<\ln(2)$ and $g$-measure $\nu_h$, then \[\overline{d}(\nu_g,\nu_h) \leq A_g d(g,h).\]
\end{theorem}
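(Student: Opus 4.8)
The plan is to reuse the coupling constructed in the proof of Theorem \ref{g-functions and g-measures countable} and to extract from it the explicit linear dependence of the disagreement rate on $d(g,h)$. Recall that a $g$-measure on a one-sided full shift is generated ``from the tail'': given the coordinates $x_1 x_2 \cdots$, the symbol $x_0$ is drawn with probability $g(x_0 x_1 x_2 \cdots)$, and likewise for $h$ and $\nu_h$. I would couple the two processes by generating the pair $(x_n,y_n)$ from high index to low index and, at each step, using the maximal coupling of the two conditional laws of the next symbol given the already-sampled tails. This produces a shift-invariant joining $\lambda$ of $\nu_g$ and $\nu_h$, and since $\overline{d}(\nu_g,\nu_h) \le \lambda(x_0 \neq y_0)$ for any joining, it suffices to bound the stationary probability of disagreement.

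The key per-step estimate is the following. Suppose the two tails currently agree on their first $k$ coordinates, i.e. $x_{n+i}=y_{n+i}$ for $1\le i\le k$. Then for every symbol $a$ the sequences $a\,x_{n+1}x_{n+2}\cdots$ and $a\,y_{n+1}y_{n+2}\cdots$ agree on their first $k+1$ coordinates, so
\[
|\log g(a\,x_{n+1}\cdots) - \log h(a\,y_{n+1}\cdots)| \le d(g,h) + V_{k+1}(\log h) =: \rho_k,
\]
and hence the disagreement probability of the maximal coupling, being the total variation distance of the two conditional laws, is at most $1-e^{-\rho_k}$. Thus the agreement (survival) probability $s_k$ at common-tail length $k$ satisfies $s_k \ge e^{-\rho_k} = e^{-d(g,h)}e^{-V_{k+1}(\log h)}$. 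If the two symbols agree the common-tail length increases to $k+1$, and if they disagree it resets to $0$; so the common-tail length is a Markov chain that stochastically dominates the renewal chain with survival probabilities $e^{-\rho_k}$.

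Everything then reduces to a renewal computation. The stationary disagreement rate equals $1/E$, where $E=\sum_{m\ge 0}\prod_{k=0}^{m-1}s_k$ is the mean inter-disagreement time, and domination gives
\[
E \ge \sum_{m\ge 0}\prod_{k=0}^{m-1}e^{-\rho_k} = \sum_{m\ge 0} e^{-m\,d(g,h)}\exp\Bigl(-\sum_{j=1}^{m}V_j(\log h)\Bigr).
\]
Using $V_j(\log h)\le V_j(\log g)+2\,d(g,h)$ and the hypothesis $\sum_{i\ge 2}V_i(\log g)<\infty$ (together with $V_1(\log g)<\infty$), the partial sums $\sum_{j\le m}V_j(\log g)$ are bounded by a constant $\Sigma_g$ depending only on $g$, so that $\exp(-\sum_{j\le m}V_j(\log h)) \ge e^{-\Sigma_g}e^{-2m\,d(g,h)}$. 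Writing $c_g=e^{-\Sigma_g}$ this yields $E \ge c_g\sum_{m\ge 0}e^{-3m\,d(g,h)} = c_g/(1-e^{-3\,d(g,h)})$, and therefore
\[
\overline{d}(\nu_g,\nu_h) \le \frac{1}{E} \le \frac{1-e^{-3\,d(g,h)}}{c_g} \le \frac{3}{c_g}\,d(g,h),
\]
which is the asserted bound with $A_g = 3/c_g = 3e^{\Sigma_g}$. The restriction $d(g,h)<\log 2$ keeps the geometric factor $e^{-d(g,h)}>\tfrac12$ bounded away from the degenerate regime, so that the renewal series converges with the constant controlled uniformly in $h$.

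The main obstacle I expect is not this estimate but the honesty of the coupling/renewal reduction: one must ensure that the ``generate from the tail'' construction yields a genuinely shift-invariant joining and that the common-tail length really behaves as a dominated renewal process, including the initial step $k=0$, where $V_1(\log g)$ need not be small and must be absorbed into $c_g$ rather than controlled by the summability hypothesis. Since the coupling and its stationarity are already furnished by the proof of Theorem \ref{g-functions and g-measures countable}, the remaining work is precisely to carry the factor $d(g,h)$ through the renewal sum; under the stronger hypothesis $\sum_{i\ge 2}V_i(\log g)<\infty$ this produces the linear, rather than merely continuous, dependence.
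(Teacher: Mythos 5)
Your proposal follows essentially the same route as the paper's proof: reuse the stationary maximal coupling constructed in the proof of Theorem \ref{g-functions and g-measures countable}, bound $\overline{d}(\nu_g,\nu_h)$ by the stationary disagreement rate $1/\mathbb{E}(\tau)$, lower bound $\mathbb{E}(\tau)$ by a renewal series whose terms are products of per-step survival probabilities of the form $e^{-(V_\ast+d(g,h))}$, and then linearize the resulting geometric expression in $d(g,h)$. The cosmetic differences are harmless: you route the per-step estimate through $V_{k+1}(\log h)$ and convert back via $V_j(\log h)\le V_j(\log g)+2d(g,h)$, which costs an extra geometric factor and gives $A_g=3e^{\Sigma_g}$ where the paper gets $2e^{L_g}$ by comparing through $\log g(ay)$ instead of $\log h(ax)$ (so that only $V_{k+1}(\log g)+d(g,h)$ appears); and your final inequality $1-e^{-3t}\le 3t$ makes the restriction $d(g,h)<\ln 2$ superfluous, whereas the paper uses it for $e^{t}-1\le 2t$.

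The substantive discrepancy is the initial renewal step, exactly the point you flag at the end. Your product starts at $\rho_0=d(g,h)+V_1$, so your constant $\Sigma_g$ contains $V_1(\log g)$, which you invoke as finite. That is \emph{not} granted by the theorem: the hypothesis is only $\sum_{i\ge 2}V_i(g)<\infty$, and on a countable alphabet this does not imply $V_1(g)<\infty$ (take $g$ depending only on $x_0x_1$, i.e. $g(x)=q(x_0\mid x_1)$ with the rows $q(\cdot\mid b)$ not uniformly comparable as $b$ varies, e.g. geometric laws with different ratios; then $V_i(g)=0$ for $i\ge 2$ but $V_1(g)=\infty$). So, read against the stated theorem, your argument proves a weaker statement, with a constant degrading like $e^{V_1(g)}$. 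For comparison, the paper's bound (\ref{expectation}) has the product running over $V_2,V_3,\dots$ only, so its $L_g$ matches the stated hypothesis exactly; but that indexing comes from applying (\ref{upper}) one step off: at a renewal the two pasts share no common prefix, so the overlap of the next-symbol laws is only bounded below by $e^{-(V_1(g)+d(g,h))}$ --- your $\rho_0$ --- and not by $e^{-(V_2(g)+d(g,h))}$ as the paper asserts for its first factor ((\ref{upper}) presupposes at least one coordinate of agreement). In other words, your bookkeeping is the honest one for this coupling, and the $e^{V_1(g)}$ factor cannot be removed from it; the mismatch you uncovered is between what the per-step maximal coupling can deliver and the theorem's literal hypothesis, not an artifact of your write-up. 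To present this as a proof of the statement as written, you must either add $V_1(g)<\infty$ to the hypotheses (equivalently, display $A_g$ as depending on $V_1(g)$), or produce a genuinely different coupling or renewal decomposition in which the factor $e^{-V_1}$ does not occur --- the per-step maximal coupling does not provide one, since after a disagreement the two conditional laws may have arbitrarily small overlap.
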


We prove Theorems \ref{g-functions and g-measures countable} and \ref{g-functions and g-measures Lipschitz} in section \ref{g-functions}. The proof of Theorem  \ref{g-functions and g-measures countable} requires the construction of a measure on $\Sigma \times \Sigma$ that is a fixed point for a map using a limiting procedure. In Coelho and Quas' proof, the finiteness of the alphabet plays a critical role. The challenge is that since in our case $\Sigma$ is not compact, then the set of probability measures on $\Sigma \times \Sigma$ is not weak*-compact, and we cannot use the Schauder–Tychonoff Theorem and can no longer use estimates that depend on the number of states. We overcome this obstacle by constructing a sequence of measures that is `tight' and by defining the metric $d$ on the space of $g$-functions in terms of $\log g$ instead of $g$.

The second result gives the continuous dependence of the normalized potentials $\log(g_{\phi})$ on the corresponding potentials for countable state Markov shifts. Given a graph $G$ with set of vertices $S$ and a set of directed edges, the Topological Markov Shift (TMS) associated to $G$ is the collection of infinite admissible paths on this directed graph. If $S$ is a finite set, we call this a subshift of finite type (SFT) and if $S$ is a countable (infinite) set, the corresponding TMS is called a countable-state Markov shift (CMS). Countable-state Markov shifts are of interest as they arise as coding spaces for non-uniformly hyperbolic systems which cannot be coded by an SFT. 

\begin{theorem} \label{Potential and normalized potentials}
Let $\Sigma$ be a mixing CMS. Let $0<\theta<1$ and let $A_\theta$ be the set of $\theta-$weakly Hölder continuous strongly positive recurrent potentials with finite Gurevich pressure. Let $d_\theta$ be the metric associated to the Hölder norm on $A_\theta$ and let $\phi$ be a potential in $A_{\theta}$. Then for given $\epsilon > 0$, there exists a $\delta>0$ such that  \[\|\log(g_{\phi})-\log(g_{\tau})\|_{\infty} < \epsilon ~\text{whenever} ~\tau \in A_{\theta}~\text{and}~d_{\theta}(\phi,\tau)< \delta\]
\end{theorem}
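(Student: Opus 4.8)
The plan is to route everything through the eigendata of the Ruelle operator and reduce the statement to continuity of the leading eigenfunction. For an SPR potential $\phi \in A_\theta$ with finite Gurevich pressure, the Generalized RPF Theorem (Section \ref{TF for CMS}) provides a positive eigenfunction $h_\phi$ with $L_\phi h_\phi = \lambda_\phi h_\phi$ and $\lambda_\phi = e^{P_G(\phi)}$, and the associated normalized potential is
\[
\log g_\phi = \phi - P_G(\phi) + \log h_\phi - \log h_\phi\circ\sigma ,
\]
which one checks directly from $L_{\log g_\phi}\mathbf 1 = \lambda_\phi^{-1} h_\phi^{-1} L_\phi h_\phi = 1$. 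Writing $w := \log h_\phi - \log h_\tau$, this gives
\[
\log g_\phi - \log g_\tau = (\phi - \tau) - \bigl(P_G(\phi) - P_G(\tau)\bigr) + (w - w\circ\sigma),
\]
so that $\|\log g_\phi - \log g_\tau\|_\infty \le \|\phi-\tau\|_\infty + |P_G(\phi)-P_G(\tau)| + 2\|w\|_\infty$. The first term is at most $d_\theta(\phi,\tau)$, since the sup norm is dominated by the Hölder norm. The second term is controlled the same way: the Gurevich pressure is monotone in the potential and satisfies $P_G(\psi+c)=P_G(\psi)+c$ for constants $c$, so sandwiching $\phi$ between $\tau\pm\|\phi-\tau\|_\infty$ yields $|P_G(\phi)-P_G(\tau)|\le\|\phi-\tau\|_\infty$ (both pressures being finite on $A_\theta$). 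The whole problem therefore reduces to showing $\|w\|_\infty\to 0$ as $d_\theta(\phi,\tau)\to 0$, under the normalization $h_\phi(x^*)=h_\tau(x^*)=1$ at a fixed reference point $x^*$.

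For the eigenfunction I would argue by spectral perturbation, first neutralizing the possible unboundedness of $h_\tau$ by conjugating with the \emph{fixed} reference eigenfunction $h_\phi$. Set $\widetilde L_\tau f := h_\phi^{-1}L_\tau(h_\phi f)$. Then $\widetilde L_\phi = \lambda_\phi\widehat L_\phi$, where $\widehat L_\phi=L_{\log g_\phi}$ is the normalized (Markov) operator, which by strong positive recurrence has a spectral gap on the space $\mathcal H_\theta$ of bounded $\theta$-weakly Hölder functions: $\lambda_\phi$ is a simple isolated eigenvalue of $\widetilde L_\phi$ with eigenfunction $\mathbf 1$, the rest of the spectrum having strictly smaller modulus. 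A direct computation identifies $\widetilde h_\tau := h_\tau/h_\phi$ as the leading eigenfunction of $\widetilde L_\tau$ (eigenvalue $\lambda_\tau$), and since $\log\widetilde h_\tau = -w$ with $\widetilde h_\tau(x^*)=1$, it suffices to prove $\widetilde h_\tau\to\mathbf 1$ in sup norm as $\tau\to\phi$.

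The conjugated operators depend continuously on the potential in operator norm. From $|e^{\tau(y)}-e^{\tau'(y)}|\le e^{\tau(y)}\bigl(e^{\|\tau-\tau'\|_\infty}-1\bigr)$ one gets, for $f\in\mathcal H_\theta$,
\[
\bigl|(\widetilde L_\tau - \widetilde L_{\tau'})f\bigr| \le \bigl(e^{\|\tau-\tau'\|_\infty}-1\bigr)\,\widetilde L_\tau|f| ,
\]
and a companion estimate on the $n$-th variations (using the full metric $d_\theta$, not merely the sup norm) shows $\|\widetilde L_\tau-\widetilde L_{\tau'}\|_{\mathcal H_\theta}=O(d_\theta(\tau,\tau'))$, with $\|\widetilde L_\tau\|_{\mathcal H_\theta}$ uniformly bounded for $\tau$ near $\phi$. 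Standard perturbation theory for an isolated simple eigenvalue then makes the leading eigenprojection of $\widetilde L_\tau$ vary continuously in norm, so $\widetilde h_\tau\to\widetilde h_\phi=\mathbf 1$ in $\mathcal H_\theta$, hence in sup norm; this yields $\|w\|_\infty\to 0$ and closes the argument.

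The main obstacle is the \emph{uniform} spectral gap in the non-compact setting: on an infinite alphabet $L_\tau$ is an unbounded operator and the partition sums are infinite series, so neither the existence of a gap nor its stability as $\tau$ ranges over a $d_\theta$-ball around $\phi$ is automatic. This is precisely where strong positive recurrence enters: SPR is the quantitative statement that the Gurevich pressure strictly exceeds the pressure of the excursions away from a reference set, and this gap both produces the spectral gap of the normalized operator on $\mathcal H_\theta$ and is an open condition stable under small $\|\cdot\|_\infty$-perturbations, so it can be taken uniform on a sufficiently small ball. Making this uniformity quantitative is the technical heart of the proof. As an alternative route, one can replace the perturbation step by a Birkhoff cone contraction: the normalized operators uniformly contract the Hilbert metric of a cone of log-weakly-Hölder functions, and the $e^{\pm\|\phi-\tau\|_\infty}$ pinch on the transfer summands forces the fixed directions $h_\phi,h_\tau$ to be Hilbert-close, controlling the oscillation—and thus, after the reference normalization, the sup norm—of $w$; this variant additionally produces the local Lipschitz dependence in the spirit of Theorem \ref{g-functions and g-measures Lipschitz}.
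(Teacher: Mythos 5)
Your overall skeleton matches the paper's: write $\log g_\phi = \phi - P_G(\phi) + \log h_\phi - \log h_\phi\circ\sigma$, reduce the theorem to continuity of the eigenfunction data, prove that the transfer operator depends continuously on the potential in an operator norm (the paper's Lemma~\ref{operator lemma}), and invoke analytic perturbation of the isolated simple eigenvalue to get continuity of the eigenprojection (the paper's Lemma~\ref{eigenfunction lemma}). Your explicit bound $|P_G(\phi)-P_G(\tau)|\le\|\phi-\tau\|_\infty$ via monotonicity and translation invariance of Gurevich pressure is a nice touch that the paper leaves implicit in its ``without loss of generality $P_G(\phi)=P_G(\tau)=0$'' step.

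The gap is in the one sentence carrying all the weight: the claim that SPR gives the conjugated (normalized) operator $\widehat L_\phi = L_{\log g_\phi}$ a spectral gap on the space $\mathcal H_\theta$ of bounded $\theta$-weakly H\"older functions. For a general mixing CMS --- the setting of the theorem --- this is false, not merely unproved. A spectral gap on a space whose norm dominates $\|\cdot\|_\infty$ and which contains $\mathbbm{1}_{[a]}$ would force $\|\widehat L_\phi^n \mathbbm{1}_{[a]} - m_\phi([a])\mathbf{1}\|_\infty \to 0$ exponentially; but $\widehat L_\phi^n \mathbbm{1}_{[a]}(x) = 0$ whenever there is no admissible word of length $n$ from $a$ to $x_0$, and topological mixing does not prevent such states from existing for every $n$. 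Concretely, take the CMS with states $\lbrace a\rbrace\cup\lbrace (i,j)\colon 1\le j\le i\rbrace$ and edges $a\to a$ and $a\to(i,1)\to(i,2)\to\cdots\to(i,i)\to a$: it is topologically mixing, $\phi=0$ is SPR with $P_G(0)=\ln 2<\infty$ (one first-return loop per length, $2^{n-1}$ loops of length $n$ at $a$), yet any $x$ with $x_0=(n+1,n+1)$ satisfies $\widehat L_\phi^n\mathbbm{1}_{[a]}(x)=0$, so the sup-distance to the constant $m_\phi([a])>0$ never decays. This is exactly why the paper works on the Cyr--Sarig space $\mathscr{B}$ of Section~\ref{SGP}, whose norm carries the weights $h_0[b]$ and the separation time $s_a(x,y)$ counting returns to the reference state: convergence in $\mathscr{B}$ gives only control \emph{relative to} $h_0$, never sup-norm control, and Lemma~\ref{eigenfunction lemma} is engineered so that this relative (multiplicative) control still yields the scale-invariant ratio bound (\ref{eigenfunction conclusion}), which is all that is needed to compare $\log g_\phi$ and $\log g_\tau$. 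Your Birkhoff-cone alternative runs into the same wall: without a BIP-type hypothesis the image of the relevant cone has infinite Hilbert diameter. Your argument as written would be fine on the full shift (which has BIP), but not in the generality claimed. A secondary omission: potentials in $A_\theta$ may have $\var_1=\infty$, which is why the paper first proves its lemmas for locally H\"older potentials and then passes to weakly H\"older ones via the recoding $S\to S\times S$ of Section~\ref{completion}; your proposal never addresses this.
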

The proof of Theorem \ref{Potential and normalized potentials} is based on the spectral gap property (SGP) of the Ruelle operator corresponding to a strongly positive recurrent potential. Section \ref{potentials} is dedicated to the proof of this result. 

Note that Theorem \ref{Potential and normalized potentials} holds for CMS, while Theorems \ref{Potential and equilibrium states} and \ref{g-functions and g-measures countable}  hold for full shifts only. The extension of Theorems \ref{Potential and equilibrium states} and \ref{g-functions and g-measures countable} to CMS will be explored in a forthcoming paper (\cite{BC}).

\subsection*{Acknowledgments} 
I would like to express my sincerest gratitude to my advisor, Vaughn Climenhaga for his support, guidance and encouragement. This project benefitted tremendously from the many discussions I had with him and his invaluable feedback. 

\section{Background}

\subsection{Setting} \label{setting}
Let $S$ be a countable set (alphabet). The \emph{one-sided full shift} $\Sigma_S$ with set of states $S$ is the set of sequences \[\Sigma_S = \lbrace (x_0x_1 \dots) \colon x_k \in S ~\text{for all}~k \in \mathbb{N} \cup \lbrace0\rbrace \rbrace\] together with the left shift map $\sigma: \Sigma_S \to \Sigma_S$ defined by $(\sigma(x))_k=x_{k+1}$ for all $k$. We equip $\Sigma_S$ with the product topology induced by the discrete topology on $S$. This topology is metrizable; one metric is given by $d(x,y)=e^{-t(x,y)}$ where $t(x,y) := \inf \lbrace k \colon x_k \neq y_k \rbrace$. The resulting topology is generated by \emph{cylinder sets} \[[a_0 \dots a_{n-1}] := \lbrace x \in X \colon x_i = a_i, i = 0, \dots n-1 \rbrace ~(n \in \mathbb{N}, a_i \in S).\] With respect to this topology, $\Sigma_S$ is a complete separable metric space, but it is not necessarily compact. A \emph{subshift} of $\Sigma_S$, also known as a \emph{shift space}, is a closed subset $\Sigma$ that is shift invariant ($\sigma(\Sigma)=\Sigma$). A \emph{finite word} over the alphabet $S$ is a finite sequence of symbols from $S$. The $language$ of a shift space $\Sigma$ is the set of finite words that appear in some sequence in $\Sigma$. Such words are called \emph{admissible}. 

Let $A=(t_{ij})_{S \times S}$ be a matrix of zeroes and ones, with columns and rows that are not all zeroes. The \emph{Topological Markov Shift (TMS)} with set of states $S$ and transition matrix $A$ is the shift space \[\Sigma := \lbrace (x_0,x_1, \dots) \in \Sigma_S \colon t_{x_i x_{i+1}}=1 ~\text{for all} ~i \rbrace.\] Corresponding to this, we can construct a directed graph with set of vertices $S$, and set of edges $\lbrace a \to b \colon t_{ab}=1 \rbrace$. We can then think of $\Sigma$ as the collection of infinite admissible paths on this directed graph. 
$\Sigma$ is compact if and only if the $S$ is finite. A \emph{subshift of finite type} is a compact Topological Markov Shifts. Non-compact TMS are also called \emph{Countable-state Markov Shifts (CMS)}.

We assume throughout that $\Sigma$ is \emph{topologically~mixing}, that is, for any two states $a$ and $b$, there is $N(a,b)$ such that for all $n \geq N(a,b)$, there is an admissible word of length $n+1$ which starts at $a$ and ends at $b$.

A function $\phi: \Sigma \to \mathbb{R}$ is called a \emph{potential}. The modulus of continuity of $\phi$ is captured by the decay of variations of $\phi$. The \emph{variations} for $\phi$ are \[\var_n(\phi)= \sup \lbrace |\phi(x)-\phi(y)| \colon x,y \in \Sigma, x_i = y_i, 0 \leq i \leq n-1\rbrace. \]
A potential $\phi$ is uniformly continuous if and only if $\var_n(\phi) \to 0$.  $\phi$ is called:
\begin{enumerate}
\item \textbf{$\theta$-weakly Hölder continuous} for some $0<\theta<1$ if there exists a constant $C_\phi > 0$ such that $\var_n(\phi) \leq C_{\phi} \theta^n$, for all $n \geq 2$. If in addition, $\var_1(\phi)$ is finite, $\phi$ is called \textbf{locally Hölder continuous}. Furthermore, if $\phi$ is bounded, then it is called \textbf{$\theta$-Hölder}. $\theta$ is called the \emph{Hölder exponent} of $\phi$.
\item A function with \textbf{summable variations} if $\sum_{n \geq 2} \var_n(\phi)< \infty$.
\end{enumerate} Note that the condition of being weakly Hölder continuous is stronger than summable variations. None of these two regularity conditions imply boundedness, and it is also possible that $\var_1(\phi)= \infty$. 

Let $0<\theta<1$ and let $\| \cdot \|_{\theta}$ denote the Hölder norm, which is given by \begin{equation} \label{Hölder norm} \| \phi \|_{\theta}:= \| \phi \|_{\infty}+ |\phi|_{\theta}, \end{equation} where $\| \cdot \|_{\infty}$ is the uniform norm and \begin{equation} \label{Hölder semi norm} |\phi |_{\theta}:= \sup \lbrace |\phi(x)- \phi(y)| / \theta^{t(x,y)} \colon x,y \in X, x \neq y\rbrace. \end{equation} Let $d_{\theta}$ be the metric coming from the norm $\| \cdot \|_{\theta}$. Let $A_{\theta}$ be the set of $\theta$-weakly Hölder continuous strongly positive recurrent potentials with finite Gurevich pressure (see Section \ref{TF for CMS}). Note that it is possible that $d_{\theta}(\phi, \psi)= \infty$ for potentials $\phi$ and $\psi$ in $A_{\theta}$.

\subsection{Thermodynamic formalism for CMS} \label{TF for CMS}
We use thermodynamic formalism for CMS developed in \cite{Sar99}, \cite{Sar01} and \cite{BS03}.  Let $\phi$ be a potential with summable variations. The \emph{Birkhoff sums} of $\phi$ are denoted by $S_n(\phi) := \sum_{k=0}^{n-1} \phi \circ \sigma^k$. For any state $a$, let 
\begin{align*}
\Per_n(\Sigma,a) &:= \lbrace x \in \Sigma \colon \sigma^n(x)=x, x_0=a \rbrace, ~\text{and}~\\
\Per^*_n(\Sigma,a) &:= \lbrace x \in \Per_n(\Sigma,a) \colon  x_i \neq a ~\text{for all}~ 1 \leq i \leq n-1 \rbrace. 
\end{align*}
We also write 
\begin{align*}
Z_n(\phi, a) &= \sum_{x \in \Per_n(\Sigma,a)} e^{S_n\phi(x)},~\text{and}\\
Z_n^*(\phi,a) &= \sum_{x \in \Per^*_n(\Sigma,a)} e^{S_n\phi(x)}.
\end{align*}
The limit $P_G(\phi) := \lim_{n \to \infty} \frac{1}{n} \ln Z_n(\phi,a)$ exists for all states $a$ and is independent of the choice of state $a$. $P_G(\phi)$ is called the \emph{Gurevich pressure} of $\phi$. When $\phi=0$, we obtain the \emph{Gurevich entropy} $h_G(\Sigma)= P_G(0)$. 

It was proved in \cite[Theorem 3]{Sar99} that if $\|L_{\phi}(\mathbbm{1}) \|_{\infty} < \infty$, then the Gurevich pressure satisfies a variational principle: \[P_G(\phi)= \sup \left\lbrace h_\mu(\sigma)+ \int \phi \,d\mu \right\rbrace,\] where the supremum ranges over all invariant probability measures such that the sum is not of the form $\infty - \infty$. A measure $\mu_{\phi}$ which attains this supremum is called an \emph{equilibrium state} for $\phi$. 

\subsubsection{Modes of Recurrence, Ruelle-Perron-Frobenius (RPF) operator and the Generalised RPF Theorem} \label{grpf}

  Let $P_G(\phi) < \infty$ and set $\lambda :=e^{P_G(\phi)}$. Following Sarig, we say that $\phi$ is \emph{recurrent} if for some state $a, \sum_{n \geq 1} \lambda^{-n} Z_n(\phi,a)$ diverges and \emph{transient} if it converges. Further, if $\phi$ is recurrent and $\sum_{n \geq 1}n \lambda^{-n} Z^*_n(\phi,a)$ converges, $\phi$ is called \emph{positive recurrent}.  $\phi$ is said to be \emph{strongly positive recurrent (SPR)} \footnote{Sarig's original definition of SPR potentials is given below. \cite[Section 8.5]{Cli18} shows that \ref{SPR condition} is equivalent.} if 
  \begin{equation} \label{SPR condition} \limsup_{n \to \infty} \frac{1}{n} \ln Z_n^*(\phi, a) < P_G(\phi). \end{equation}
These definitions can be shown to be independent of the choice of the state $a$.

For every $\phi$ with summable variations, we associate the RPF operator $L_{\phi}$ which is defined as \[L_\phi(f)(x)= \sum_{\sigma y=x} e^{\phi(y)} f(y).\]

The Generalised RPF theorem proved in \cite{Sar01} relates the modes of recurrence to the eigendata of the RPF operator. We have that $\phi$ is recurrent if and only if there exists a $\lambda>0$, a positive continuous function $h_{\phi}$ and a conservative measure $\nu_{\phi}$ which is finite and positive on cylinders such that \[L_{\phi}h_{\phi}=\lambda h_{\phi}~\text{and}~~~~~L_{\phi}^*\nu_{\phi}=\lambda \nu_{\phi}.\]
Here, $\lambda=e^{P_G(\phi)}$. $\phi$ is positive recurrent if and only if $\int h_{\phi} \, d\nu_{\phi} < \infty$. The measure $dm_{\phi}=h_{\phi} d\nu_{\phi}$ is called the RPF measure for $\phi$. If $\phi$ is positive recurrent and its RPF measure has finite entropy, then it is an equilibrium state for $\phi$. For subshifts of finite type, this is always the case. Furthermore, if the equilibrium state exists, then it is unique (\cite{BS03}). 

We now give Sarig's original definition of the strong positive recurrence condition in terms of a \emph{discriminant}.  Let $\Sigma$ be a TMS and $a \in S$. We define an induced system on $[a]$ as $\sigma_a \colon \Sigma_a \to \Sigma_a$ where $\Sigma_a= \lbrace x \in \Sigma \colon x_0=a, x_i = a ~\text{infinitely often} \rbrace$ and $\sigma_a= \sigma^{\tau_a(x)}(x)$, where $\tau_a(x)= \min \lbrace n \geq 1 \colon x_n=a \rbrace$.

This system can be given the structure of a TMS as follows. Let $\overline{S}=\lbrace [a, \xi_1, \cdots, \xi_{n-1}, a]: n \geq 1, \xi_i \neq a \rbrace \setminus \emptyset$. Let $\overline{\Sigma} = \overline{S}^{\mathbb{N} \cup \lbrace 0 \rbrace}$ and $\overline{\sigma}$ be the left shift map on $\overline{\Sigma}$. Then $(\overline{\Sigma}, \overline{\sigma})$ is topologically conjugate to $(\Sigma_a, \sigma_a)$ and the conjugacy $\pi: \overline{\Sigma} \to \Sigma_a$ is given by: 
\[\pi([a,\xi^0,a],[a,\xi^1,a],\dots) := (a,\xi^0,a,\xi^1, a \dots).\]

Corresponding to every potential function $\phi$, we get an induced potential $\overline{\phi}: \overline{\Sigma} \to \mathbb{R}$, which is defined as \[\overline{\phi} := \left(\sum_{i=0}^{\tau_a-1} \phi \circ \sigma^i \right) \circ \pi. \]
If $\phi$ is weakly Hölder continuous, then $\overline{\phi}$ is locally Hölder continuous. 
The \emph{$a$-discriminant} of $\phi$ is defined as: 
\[\Delta_a[\phi] := \sup \lbrace P_G(\overline{\phi+p}) \colon p \in \mathbb{R} ~\text{s.t.}~ P_G(\overline{\phi+p})< \infty \rbrace.\]
$\phi$ is said to be \emph{strongly positive recurrent} if there exists a state $a$ such that $\Delta_a[\phi]>0$. If $\phi$ is strongly positive recurrent, then $P_G(\phi)=0 \iff P_G(\overline{\phi})= 0$.

\subsection{$\overline{d}$-metric}
 Ornstein introduced the concept of $\overline{d}$-distance on the space of invariant measures on a shift space to study the isomorphism problem for Bernoulli shifts. He also proved that the set of processes which are measure theoretically isomorphic to Bernoulli shifts is $\overline{d}$-closed (see \cite{Orn73}, \cite{Orn74}). 

Let $(\Sigma,\sigma)$ be a CMS. We denote the space of Borel probability measures on $\Sigma$ by $\ms{M}(\Sigma)$ and let $\ms{M}^{\sigma} (\Sigma)$ denote the space of shift invariant Borel probability measures on $\Sigma$. For $\nu_1$ and $\nu_2$ in $\ms{M}^{\sigma} (\Sigma)$ , a \emph{joining} of $(\Sigma,\nu_1)$ and $(\Sigma,\nu_2)$ is a shift invariant measure on $\Sigma \times \Sigma$ with the property that $\mu(A \times \Sigma)= \nu_1(A)$ and $\mu(\Sigma \times B)=\nu_2(B)$ for any measurable subsets $A$ and $B$ of $\Sigma$. Equivalently, a joining of $(\Sigma,\nu_1)$ and $(\Sigma,\nu_2)$  is a shift invariant measure on $\Sigma \times \Sigma$ such that $\pi_1^*\mu=\nu_1$ and  $\pi_2^*\mu=\nu_2$, where $\pi_1, \pi_2$ are projections onto the two coordinates. We now define $\overline{d}$-metric in terms of joinings. The following definition is shown to be equivalent to Ornstein's original definition in \cite{Rud90}.

\begin{definition}
Let $J(\nu_1,\nu_2)$ be the set of joinings of $(\Sigma,\nu_1)$ and $(\Sigma,\nu_2)$. Let $\delta(x,y)$ be 1 if $x_0 \neq y_0$ and $0$ otherwise. Then  \begin{align*}
\bar{d}(\nu_1,\nu_2) &= \inf_{\mu \in J(\nu_1,\nu_2)} \int \delta(x,y) \, d\mu(x,y) \\
&= \inf_{\mu \in J(\nu_1,\nu_2)} \mu(\lbrace (x,y) \in \Sigma \times \Sigma \colon x_0 \neq y_0 \rbrace) 
\end{align*}
\end{definition}

The more familiar topology on the space of Borel probability measures on a metric space $Y$ is the weak* topology. A sequence $\mu_n$ in $\ms{M}(Y)$ is said to converge weak* to $\mu$ if $\int f \,d\mu_n \to \int f \,d\mu$ for all bounded continuous function $f$ on $Y$. When $Y$ is compact, $\ms{M}(Y)$ is compact in the weak* topology. When $Y$ is not compact, it is important to first understand under what conditions does a sequence in $\ms{M}(Y)$ have a convergent subsequence. 

A subset of $\ms{M}(Y)$ is said to be \textbf{tight} if for every $\epsilon > 0$, there exists a compact subset $K_\epsilon$, such that $\mu(K_{\epsilon}^c)< \epsilon$ for all measures $\mu$ in the subset. If $Y$ is a complete separable metric space and $\mu_n$ is a tight sequence in $\ms{M}(Y)$, then the Helly-Prokhorov theorem gives that it has a subsequence that converges  weak* to some $\mu$ in $\ms{M}(Y)$. Hence, every tight subset of $\ms{M}(Y)$ is precompact. 

\begin{lemma} \label{compactness}
For $\nu_1$ and $\nu_2$ in $\ms{M}^{\sigma} (\Sigma)$, $J(\nu_1, \nu_2)$ is weak*-compact and hence the infimum in the above definition of $\overline{d}$-metric is actually a minimum.
\end{lemma}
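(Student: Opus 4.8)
The plan is to prove that $J(\nu_1,\nu_2)$ is nonempty, convex, and weak*-compact, from which the attainment of the infimum follows by lower semicontinuity of the objective functional. Nonemptiness is immediate since the product measure $\nu_1 \times \nu_2$ is a joining. For compactness, the key point is that $\Sigma \times \Sigma$ is a complete separable metric space but \emph{not} compact, so I cannot invoke weak*-compactness of the full space $\ms{M}(\Sigma \times \Sigma)$; instead I will use the Helly--Prokhorov theorem via tightness, exactly as flagged in the discussion preceding the lemma.

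The main steps, in order, would be as follows. First I would show that $J(\nu_1,\nu_2)$ is \emph{tight}. Given $\epsilon > 0$, since $\nu_1$ and $\nu_2$ are individually tight (being Borel probability measures on a complete separable metric space, by Ulam's theorem), choose compact sets $K_1, K_2 \subseteq \Sigma$ with $\nu_1(K_1^c) < \epsilon/2$ and $\nu_2(K_2^c) < \epsilon/2$. Then $K_1 \times K_2$ is compact in $\Sigma \times \Sigma$, and for any joining $\mu$ the marginal conditions give
\[
\mu((K_1 \times K_2)^c) \leq \mu(K_1^c \times \Sigma) + \mu(\Sigma \times K_2^c) = \nu_1(K_1^c) + \nu_2(K_2^c) < \epsilon,
\]
and crucially this bound is uniform over all $\mu \in J(\nu_1,\nu_2)$ because the marginals are fixed. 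Hence the family is tight, so by Helly--Prokhorov it is precompact in the weak* topology. Second, I would show $J(\nu_1,\nu_2)$ is weak*-\emph{closed}. This reduces to checking that each defining constraint is preserved under weak* limits: shift-invariance ($\mu \circ (\sigma\times\sigma)^{-1} = \mu$) and the two marginal conditions $\pi_1^*\mu = \nu_1$, $\pi_2^*\mu = \nu_2$. These are all closed conditions because testing against bounded continuous functions (and approximating indicators of cylinders, which are clopen, by continuous functions) passes to the limit. A closed subset of a precompact set is compact, giving weak*-compactness of $J(\nu_1,\nu_2)$.

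Finally, to conclude that the infimum is attained, I would note that the integrand $\delta(x,y) = \mathbbm{1}_{\{x_0 \neq y_0\}}$ is the indicator of a clopen set in $\Sigma \times \Sigma$ (each $1$-cylinder is clopen in the product topology), so $\delta$ is bounded and continuous. Therefore $\mu \mapsto \int \delta \, d\mu$ is weak*-continuous, hence in particular lower semicontinuous, on the compact set $J(\nu_1,\nu_2)$, and a lower semicontinuous function on a nonempty compact set attains its infimum. I expect the tightness step to be the technical heart of the argument: it is precisely where non-compactness of the countable-state setting bites, and it is what replaces the automatic weak*-compactness available in the finite-alphabet case of Coelho and Quas. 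The closedness and continuity steps are routine once one observes that cylinders, and in particular the $1$-cylinders governing $\delta$, are clopen.
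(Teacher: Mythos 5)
Your proposal is correct and takes essentially the same route as the paper: tightness of $J(\nu_1,\nu_2)$ deduced from tightness of the two marginals, Helly--Prokhorov for weak*-precompactness, and weak*-closedness of the joining conditions. You additionally supply details the paper leaves implicit --- nonemptiness via $\nu_1 \times \nu_2$, preservation of shift-invariance under weak* limits, and the attainment step via continuity of $\delta$ (indicator of a clopen set) --- which only strengthens the argument.
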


\begin{proof}

Since every finite Borel measure on a complete separable metric space is tight, then for given $\epsilon > 0$ we get compact sets $A$ and $B$ in $\Sigma$ such that $\nu_1(\Sigma \setminus  A) < \epsilon$ and $\nu_2(\Sigma \setminus B)<\epsilon$. 

Let $K=A \times B \subset \Sigma \times \Sigma$ and take $m$ in $J(\nu_1, \nu_2)$. Since \[(\Sigma \times \Sigma) \setminus (A \times B) \subset (\Sigma \setminus A) \times \Sigma) \cup (\Sigma \times (\Sigma \setminus B) \] 
We conclude that
\[m((\Sigma \times \Sigma) \setminus (A \times B)) \leq \nu_1(\Sigma \setminus A) + \nu_2(\Sigma \setminus B) < 2\epsilon \]

Therefore, $J(\nu_1, \nu_2)$ is tight, hence it is pre-compact in the weak*-topology. Also, $J(\nu_1, \nu_2)$ is closed. Indeed, let $\mu_n$ be a sequence in $J(\nu_1,\nu_2)$ converging to $\nu$ and consider for $f \in C_b(X)$,
\[\int f \circ \pi_1 \,d\nu = \lim_{n \to \infty} \int f \circ \pi_1 \,d\mu_n= \lim_{n \to \infty} \int f \, d\nu_1= \int f d\nu_1\]

Hence, $(\pi_1)_*\nu=\nu_1$ and similarly $(\pi_2)_*\nu=\nu_2$. So, $\nu$ is in $J(\nu_1, \nu_2)$.
Hence, $J(\nu_1, \nu_2)$ is compact. 
\end{proof}

\section{$g$-function and $\overline{d}$-continuity} \label{g-functions}

\subsection{Definitions and basic facts}

The notion of $g$-functions and $g$-measures was introduced to ergodic theory in \cite{Kea72} and is a useful tool in thermodynamic formalism. In \cite{Wal75}, Walters gave a proof of Ruelle's operator theorem using $g$-measures. 

Let $\Sigma$ be a full shift with a countable set of states $S$ and let $\sigma$ be the left shift map on $\Sigma$. A continuous function $g \colon \Sigma \to (0,1]$ is called a \emph{$g$-function} if \[\sum_{y \in \sigma^{-1}(x)} g(y)=1 , ~ \text{for all} ~ x \in \Sigma.\]

Conformal measures for $g$-functions are called $g$-measures, that is $\mu$ is a $g$-measure if and only if $L_g^*(\mu)=\mu$. For a given $g$-function, a \emph{$g$-measure} is a $\sigma$-invariant measure $\mu$ in the space of Borel probability measures of $\Sigma$ such that \[g(x)= \lim_{|A| \to 0, x \in A} \frac{\mu(A)}{\mu(\sigma(A))}, \mu \text{-a.e}\]
where $|A|$ denotes the diameter of $A$.

We denote by $[x]^n= \lbrace y: y_k=x_k$, for all $k \leq n \rbrace$, the cylinder of those points which agree with $x$ for the first $n+1$ places. The measure $\mu$ is a $g$-measure if \[\displaystyle\lim_{n \to \infty} \frac{\mu([ix]^{n+1})}{\mu([x]^n)}= g(ix),~ \mu \text{-a.e}.\]  
 
We now discuss a probabilistic interpretation of $g$-functions and $g$-measures. We can view $g(ix)$ as a Markov transition probability, giving the probability of moving from $x$ to $ix$ and a $g$-measure is an invariant measure for this Markov process (see \cite{Qua96}). More precisely, let us consider a sequence of random variables $(X_n)_{n \in \mathbb{Z}}$ taking values in the set of states $S$. $X_n$ are maps from some probability space $\Omega$ to $S$. Suppose the sequence satisfies  \[\mathbb{P}(X_n=i| X_{n-1}=x_1,X_{n-2}=x_2, \dots)= g(ix).\] We fix a natural number $n$ and define $\rho_n \colon \Omega \to \Sigma$ by $(\rho_n(\omega))_i = X_{n-i}(\omega)$. The probability distribution on $(X_m)_{m \leq n}$ pushes forward to a probability measure on $\Sigma$ where $\rho_n^*(\mathbb{P}(A))= \mathbb{P}(\rho_n^{-1}(A)))$. $g$-measures correspond to stationary distributions for the random variables. 

For SFTs, continuous functions have at least one $g$-measure associated to them but uniqueness is not guaranteed. In the cases where $g$ is Hölder continuous or the variations of $g$ are summable, the $g$-measure is known to be unique (proofs can be found in \cite{Kea72}, \cite{Wal75}).

Let $\phi$ be a recurrent potential, and let  $\lambda=e^{P_G(\phi)}$. Let $h_{\phi}$ be given by the Generalized Ruelle Perron Frobenius theorem. Then $g_{\phi}:= \frac{e^{\phi}h_{\phi}}{\lambda h_{\phi} \circ \sigma}$ is a $g$-function. The RPF measure for $\phi$ is a $g$-measure for $g_{\phi}$. 

The following lemmas give some results about the RPF operator associated to continuous $g$-functions. 
 
 \begin{lemma} \label{transfer}
Let $g$ be a continuous $g$-function on $\Sigma$ and  $L_{\log g}: C(\Sigma) \to C(\Sigma)$ be the RPF operator corresponding to $\log g$. Then $L_{\log g}$ is continuous with respect to the uniform norm on $C_b(\Sigma)$.
\end{lemma}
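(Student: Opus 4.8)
The plan is to show that $L_{\log g}$ is a bounded linear operator on $C_b(\Sigma)$; since the operator is linear, boundedness and continuity are equivalent, and in fact I will show it is a contraction. For the full shift, $\sigma^{-1}(x) = \{ ix : i \in S \}$, so the operator acts by $L_{\log g}(f)(x) = \sum_{i \in S} g(ix) f(ix)$. Two things must be checked: that this series defines a bounded function, and that the resulting function is continuous, so that $L_{\log g}$ really maps into $C_b(\Sigma)$.

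The boundedness is immediate from the defining property of a $g$-function. Since $g(ix) > 0$ and $\sum_{i \in S} g(ix) = 1$ for every $x$, I get $|L_{\log g}(f)(x)| \le \sum_{i \in S} g(ix)\,|f(ix)| \le \|f\|_\infty \sum_{i \in S} g(ix) = \|f\|_\infty$. Hence $\|L_{\log g} f\|_\infty \le \|f\|_\infty$, so the operator has norm at most $1$.

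The genuine content is verifying that $L_{\log g}(f)$ is continuous, and here the infinite alphabet is the obstacle: unlike the finite-alphabet case, $L_{\log g}(f)$ is an infinite sum of continuous functions whose continuity is not automatic. To handle this I would prove continuity sequentially. Fix $x^{(n)} \to x^{*}$ and set $K = \{ x^{(n)} : n \ge 1 \} \cup \{ x^{*} \}$, which is compact. The partial sums $G_N := \sum_{i \le N} g(i\,\cdot)$ are continuous, nondecreasing in $N$, and increase pointwise to the constant function $1$, which is continuous; by Dini's theorem the convergence $G_N \to 1$ is uniform on the compact set $K$. Equivalently, the tails $\sum_{i > N} g(iy)$ tend to $0$ uniformly for $y \in K$.

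With this uniform tail control the rest is routine. Given $\epsilon > 0$, choose $N$ so that $\sum_{i > N} g(iy) < \epsilon$ for all $y \in K$, and split the difference $L_{\log g}(f)(x^{(n)}) - L_{\log g}(f)(x^{*})$ into the finite head $\sum_{i \le N}$ and the two tails $\sum_{i > N}$. Each tail is bounded by $\|f\|_\infty \epsilon$ using the uniform estimate, while the finite head tends to $0$ as $n \to \infty$ because $x \mapsto ix$ is continuous and $g, f$ are continuous. Letting $n \to \infty$ and then $\epsilon \to 0$ gives $L_{\log g}(f)(x^{(n)}) \to L_{\log g}(f)(x^{*})$, so $L_{\log g}(f) \in C_b(\Sigma)$. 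The main obstacle throughout is this continuity step; the Dini argument, which exploits the monotonicity of the partial sums of the nonnegative terms $g(ix)$, is what replaces the finiteness of the alphabet used in the classical setting.
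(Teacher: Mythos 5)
Your proposal is correct, and its core coincides with the paper's proof: the paper's entire argument is the two-line contraction estimate $\lvert L_{\log g}(f)(x)\rvert \le \sum_{y \in \sigma^{-1}(x)} g(y)\lvert f(y)\rvert \le \|f\|_{\infty}$, exactly your first step. Where you genuinely go beyond the paper is the well-definedness check: the paper takes for granted that $L_{\log g}(f)$ is itself continuous, i.e.\ that the operator really maps $C_b(\Sigma)$ into $C_b(\Sigma)$, whereas you prove it. This is not a vacuous point in the countable-alphabet setting, since $L_{\log g}(f)$ is an infinite series of continuous functions and $\Sigma$ is not locally compact, so one cannot invoke local uniform convergence on neighborhoods; your device of reducing to sequential continuity on the compact set $K = \{x^{(n)}\} \cup \{x^{*}\}$ and applying Dini's theorem to the partial sums $G_N = \sum_{i \le N} g(i\,\cdot)$ (monotone, converging pointwise to the continuous constant $1$) is exactly the right way to get uniform tail control there, and the head/tail splitting that follows is sound. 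In short: same proof of the stated norm continuity, plus a correct and worthwhile verification of a hypothesis the paper leaves implicit.
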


\begin{proof}
$\| L_{\log g}(f) \|_\text{sup}= \sup_{x \in \Sigma} \lvert L_{\log g}(f)(x) \rvert = \sup_{x \in \Sigma} \lvert \sum_{y \in \sigma^{-1}(x)} g(y) f(y) \rvert $

But\[\lvert f(y) \rvert \leq \|f\|_{\text{sup}},~\text{for all}~ y \in \Sigma, ~\text{and}\]
\[\sum_{y \in \sigma^{-1}(x)} g(y)=1,~\text{for all}~ x \in \Sigma\]
Hence, \[\| L_{\log g}(f) \|_\text{sup} \leq \|f\|_{\text{sup}} \qedhere \]
\end{proof} 
The following result about dual operators on Banach spaces shows that $L_{\log g}^*$ is also continuous. 
\begin{lemma} \label{dual}
Let $T:V \to V$ be a bounded linear operator on a Banach space $V$. Let $T^*: V^* \to V^*$ be the dual of $T$, defined by 
\[T^*(f)= f \circ T\]
 Then $T^*$ is continuous with respect to the weak*-topology on $V^*$.
\end{lemma}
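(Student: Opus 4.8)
The plan is to unwind the definition of the weak*-topology on $V^*$ and reduce the continuity of $T^*$ to the continuity of a family of evaluation functionals that are built into the very definition of that topology. Recall that the weak*-topology on $V^*$ is the coarsest topology for which every evaluation map $E_v \colon V^* \to \mathbb{R}$, given by $E_v(f) = f(v)$ (one such map for each $v \in V$), is continuous.

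First I would invoke the universal property of such initial topologies: a map into a weak*-topologized target is continuous if and only if its composition with each generating evaluation functional is weak*-continuous. Thus it suffices to check that $E_v \circ T^*$ is weak*-continuous for every $v \in V$. The key computation is then immediate: for any $f \in V^*$,
\[
(E_v \circ T^*)(f) = (T^* f)(v) = (f \circ T)(v) = f(Tv) = E_{Tv}(f),
\]
so $E_v \circ T^* = E_{Tv}$. Since $Tv \in V$, the functional $E_{Tv}$ is one of the defining evaluation maps and is therefore weak*-continuous by the very definition of the topology. As this holds for every $v \in V$, we conclude that $T^*$ is weak*-continuous.

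Alternatively, and perhaps more transparently for a reader, I would phrase the same argument with nets. If $f_\alpha \to f$ in the weak*-topology, then by definition $f_\alpha(w) \to f(w)$ for every $w \in V$; specializing to $w = Tv$ yields $(T^* f_\alpha)(v) = f_\alpha(Tv) \to f(Tv) = (T^* f)(v)$ for every $v \in V$, which is exactly the statement that $T^* f_\alpha \to T^* f$ weak*. Because the weak*-topology is a locally convex topological vector space topology, continuity is equivalent to net-continuity, so this suffices.

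There is essentially no obstacle in this lemma. The boundedness of $T$ is used only to guarantee that $T^*$ is a well-defined map $V^* \to V^*$, i.e.\ that $f \circ T \in V^*$ whenever $f \in V^*$; the continuity assertion is then a near-tautological consequence of how the weak*-topology is defined. The one point requiring care is to remember that we are testing continuity \emph{into} the weak*-topology rather than the norm topology, which is precisely why reducing to the generating evaluation functionals $E_v$ is the correct move. Combined with Lemma \ref{transfer}, this shows that $L_{\log g}^*$ is weak*-continuous.
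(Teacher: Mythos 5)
Your proposal is correct and its net argument is exactly the paper's proof: the paper also takes a net $f_\alpha \to f$ weak*, observes $f_\alpha(Tx) \to f(Tx)$ for every $x \in V$, and concludes $T^*f_\alpha \to T^*f$ weak*. Your first formulation via the universal property of the initial topology is just a repackaging of the same observation; the only small quibble is that your appeal to local convexity is unnecessary, since net-continuity is equivalent to continuity for maps between arbitrary topological spaces.
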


\begin{proof}
Let $(f_{\alpha})_{\alpha}$ be a net in $V^*$ with weak*-limit $f$, then 
\begin{align*}
&f_{\alpha}(Tx) \to f(Tx),  \forall x ~\text{in}~ V 
\implies (f_{\alpha} \circ T)(x) \to (f \circ T)(x),  \forall x ~\text{in}~ V \\
&\implies f_{\alpha} \circ T \xrightarrow{weak*}  f \circ T 
\implies T^*(f_{\alpha}) \xrightarrow{weak*} T^*(f) \qedhere
\end{align*}
\end{proof}

\subsection{Proof of Theorem \ref{g-functions and g-measures countable}}

Let $\Sigma$ be the full shift with countable alphabet $S$. 
We define the following on the space of continuous $g$-functions. 
\begin{align*}
        V_n(g) &= \sup_{[x]^{n-1}=[y]^{n-1}}  |\log(g(x))-\log(g(y))|, \\
       d(h,g) &= \sup_x |\log(g(x))-\log(h(x))|= \|\log(g)-\log(h) \|_{\infty}
 \end{align*}
Note that when $d(h,g)< \infty$, there exists $c$ in $(0,1]$ such that \begin{equation} \label{full shift estimate} g(ix) \geq c h(iy) ~\text{and}~ h(iy) \geq c g(ix) ~\text{for all}~ i,x,y. \end{equation}

In the remainder of this section, we give a proof of Theorem \ref{g-functions and g-measures countable}, which extends Theorem 1 in \cite{CQ98} to the setting of full shifts on countable alphabets. 

\begin{proof}[Proof of Theorem \ref{g-functions and g-measures countable}]

We follow the proof of Coelho and Quas \cite{CQ98}. However, various modifications are needed since the alphabet is infinite and the space is no longer compact. To avoid estimates that depend on the number of states, we use a metric coming from the difference in the logarithms of the $g$-functions to get multiplicative control. The proof also requires the construction of a joining using a limiting procedure. Since the space of measures we work with is no longer compact, we construct a sequence of measures that is `tight' to get the joining as a limit point. 

We divide the proof into three parts. In Section \ref{tp}, we first define transition probabilities whose marginals are given by the g-functions. In Section \ref{joining}, we use these transition probabilities to get a joining of the corresponding g-measures. Finally, in Section \ref{final}, we use this joining to get an estimate on the $\overline{d}$-distance between the two g-measures to complete the proof.  The arguments in sections \ref{tp} and \ref{joining} are valid for a general CMS. The arguments in section \ref{final} depend on the equation (\ref{full shift estimate}), which is only valid for the full shift. 

\subsubsection{Transition Probabilities} \label{tp}
Let $F:S \times \Sigma \times \Sigma \to (0,1)$ be defined by \[F(i,x,y)=\min(g(ix),h(iy))\]. Let $\Delta: \Sigma \times \Sigma \to [0,1)$ be given by \[\Delta(x,y)=1-\sum_i F(i,x,y).\] 
Let $d(g,h)<\infty$ and set \[c(g,h)=\inf_{i,x,y} \left( \min\left( \frac{g(ix)}{h(iy)}, \frac{h(iy)}{g(ix)} \right) \right).\]	
Then, for all $i,x,y$
\[	F(i,x,y) \geq c(g,h) g(ix). \]
So we get that \[   \Delta(x,y)=1-\sum_i F(i,x,y) \geq c(g,h) \leq 1- e^{-(V_1(g)+d(h,g))}. \]
Let $\alpha=1-e^{-(V_1(g)+d(h,g))}$.	
For $[x]^{k}=[y]^{k}$, 
\begin{align} \label{upper}
	\Delta(x,y) &\leq 1-e^{-(V_{k+1}(g)+d(h,g))}.
	\end{align}	

We define $G:\Sigma \times \Sigma \to (0,1]$ as follows: 
	\begin{align}
	G(ix,iy) &= F(i,x,y), ~\text {and} \label{transition1} \\
	G(ix,jy) &= \frac{(g(ix)-F(i,x,y))^+(h(jy)-F(j,x,y))^+}{\Delta(x,y)} \label{transition2}
	\end{align}
	for $i \neq j$ and $a^+=\text{max}(a,0)$.
	
\begin{lemma} \label{marginals}
For $G$ defined by equations (\ref{transition1}) and (\ref{transition2}), we have that 
\begin{align*}
\sum_i G(ix,jy) = h(jy) ~\text{ for all}~j \in S ~\text{and}~ x,y \in \Sigma ~\text{and} \\
\sum_j G(ix,jy) = g(iy) ~\text{ for all}~i \in S, ~\text{and}~ x,y \in \Sigma.
\end{align*} 
\end{lemma}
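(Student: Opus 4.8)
The plan is to verify the two marginal identities by a direct computation that separates the diagonal term from the off-diagonal sum and exploits the min-structure of $F$.

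First I would record three reductions. Since $F(i,x,y)=\min(g(ix),h(iy))$ satisfies $F(i,x,y)\le g(ix)$ and $F(i,x,y)\le h(iy)$, the positive parts in (\ref{transition2}) are redundant: $(g(ix)-F(i,x,y))^+=g(ix)-F(i,x,y)\ge 0$ and $(h(jy)-F(j,x,y))^+=h(jy)-F(j,x,y)\ge 0$. Next, the $g$-function property gives the normalizations $\sum_i g(ix)=1$ and $\sum_i h(iy)=1$, since on the full shift the $\sigma$-preimage of a point consists exactly of the points $ix$, $i\in S$. Combining these with the defining identity $\sum_i F(i,x,y)=1-\Delta(x,y)$ yields $\sum_i(g(ix)-F(i,x,y))=\Delta(x,y)$ and, symmetrically, $\sum_j(h(jy)-F(j,x,y))=\Delta(x,y)$. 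Throughout I assume $\Delta(x,y)>0$ so that (\ref{transition2}) is defined; the degenerate case $\Delta(x,y)=0$ forces $g(ix)=h(iy)$ for every $i$, and both identities are then immediate.

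For the first identity I fix $j,x,y$ and split $\sum_i G(ix,jy)$ into the diagonal contribution $G(jx,jy)=F(j,x,y)$ and the off-diagonal sum over $i\neq j$. In the off-diagonal sum the factor $h(jy)-F(j,x,y)$ does not depend on $i$, so it factors out, and the remaining sum is evaluated by the normalization above: $\sum_{i\neq j}(g(ix)-F(i,x,y))=\Delta(x,y)-(g(jx)-F(j,x,y))$. Assembling the two pieces gives $\sum_i G(ix,jy)=h(jy)-\Delta(x,y)^{-1}(h(jy)-F(j,x,y))(g(jx)-F(j,x,y))$.

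The key step, which is the only real content of the argument, is the cancellation of this last product. Because $F(j,x,y)=\min(g(jx),h(jy))$ coincides with whichever of $g(jx),h(jy)$ is the smaller, at least one of $g(jx)-F(j,x,y)$ and $h(jy)-F(j,x,y)$ is zero, so their product vanishes and $\sum_i G(ix,jy)=h(jy)$. The second identity follows by the identical computation with the roles of $(g,i,x)$ and $(h,j,y)$ interchanged, and yields $\sum_j G(ix,jy)=g(ix)$. I do not anticipate a genuine obstacle: everything reduces to the $g$-function normalizations together with the elementary fact that a minimum forces one excess term to vanish; the only care needed is to treat the diagonal term separately and to keep track of which excess term survives.
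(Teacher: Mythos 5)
Your proposal is correct and follows essentially the same route as the paper: both arguments identify $\Delta(x,y)$ as the total excess $\sum_i\bigl(g(ix)-F(i,x,y)\bigr)=\sum_j\bigl(h(jy)-F(j,x,y)\bigr)$ (the paper via a partition of $S$, you via the normalizations $\sum_i g(ix)=\sum_i h(iy)=1$, which is the same computation) and then exploit the fact that the two excesses at a common index cannot both be positive because $F$ is a minimum. If anything, your write-up is slightly more careful than the paper's: you state the product-vanishing cancellation explicitly, where the paper uses it silently, and you dispose of the degenerate case $\Delta(x,y)=0$, which the paper passes over.
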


	\begin{proof}
	We fix $x,y$ and partition the set $S$ as follows $S=S_g \cup S_h \cup S_0$ where $S_0=\lbrace i \in S: g(ix)=h(iy) \rbrace, S_g= \lbrace i \in S: g(ix)>h(iy)\rbrace, S_h=\lbrace i \in S: g(ix)<h(iy) \rbrace$. 
	\begin{align*}
	\sum_i F(i,x,y) &= \sum_i \min(g(ix),h(iy)) \\
	&= \sum_{i \in S_h \cup S_0} g(ix) + \sum_{i \in S_g} h(ix) \\
	&= \sum_{i \in S_h \cup S_0} g(ix) + 1 - \sum_{i \in S_h \cup S_0} h(ix) \\	
	&= \sum_{i \in S_h \cup S_0} [g(ix)-h(iy)] +1\\
	\end{align*}
	So, we get that
	\begin{align*}
\Delta(x,y) &= - \sum_{i \in S_h \cup S_0} [g(ix)-h(iy)]\\
	\Delta(x,y) &=  \sum_{i \in S_h} [h(iy)-g(ix)]\\
	&= \sum_{i} [h(iy)-F(i,x,y)]^+ 
	\end{align*}
Finally, we get that 	\begin{align*}
	\sum_j G(ix,jy) &= \sum_{j \neq i} \frac{(g(ix)-F(i,x,y))^+(h(jy)-F(j,x,y))^+}{\Delta(x,y)}\\
	&+F(i,x,y) \\
	&= (g(ix)-F(i,x,y))^+ + F(i,x,y) \\
	&= g(ix) - F(i,x,y) +F(i,x,y) \\
	&=g(ix)
	\end{align*}
	Similarly we have\[\sum_i G(ix,jy)= h(jy)  \qedhere \] 
	\end{proof}
	
		\begin{lemma} \label{fixed point}
	Let $\pi_1: \Sigma \times \Sigma \to \Sigma$ be the projection onto the first coordinate, that is, $\pi_1(x,y)=x$. Then the following diagram commutes:
	\[ \begin{tikzcd}
\ms{M}(\Sigma \times \Sigma) \arrow{r}{L_{\log G}^*} \arrow[swap]{d}{(\pi_1)_*} & \ms{M}(\Sigma \times \Sigma) \arrow{d}{(\pi_1)_*} \\%
\ms{M}(\Sigma) \arrow{r}{L_{\log g}^*}& \ms{M}(\Sigma)
\end{tikzcd}
\]

Similarly, let $\pi_2: \Sigma \times \Sigma \to \Sigma$ be the projection onto the second coordinate, that is, $\pi_2(x,y)=y$. Then for all $m \in \ms{M}(\Sigma \times \Sigma)$, \[(\pi_2)_*(L_{\log G}^*(m))= L_{\log g}^*((\pi_2)_*(m))\]

In particular, if $m$ in $\ms{M}(\Sigma \times \Sigma)$ satisfies $(\pi_1)_{*}(m)=\nu_g$ and $(\pi_2)_{*}(m)=\nu_h$, then $(\pi_1)_{*}(L_{\log G}^*m)=\nu_g$ and $(\pi_2)_{*}(L_{\log G}^*m)=\nu_h$.
		\end{lemma}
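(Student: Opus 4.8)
The plan is to verify both identities by testing against an arbitrary bounded continuous function on $\Sigma$ and reducing everything to the marginal identities already established in Lemma \ref{marginals}. First I would record the relevant operators explicitly: on the product system $(\Sigma \times \Sigma, \sigma \times \sigma)$ the preimages of $(x,y)$ are exactly the points $(ix,jy)$, so
\[ L_{\log G}(F)(x,y) = \sum_{i,j} G(ix, jy)\, F(ix, jy), \]
and the dual $L_{\log G}^*$ is characterized by $\int F \, d(L_{\log G}^* m) = \int L_{\log G} F \, dm$ for $F \in C_b(\Sigma \times \Sigma)$. Before anything else I would check that $L_{\log G}$ maps $C_b$ into itself and preserves constants: indeed $L_{\log G}(\mathbbm{1})(x,y) = \sum_{i,j} G(ix,jy) = \sum_i g(ix) = 1$ by Lemma \ref{marginals} together with the $g$-function property, so that $L_{\log G}^*$ is a well-defined self-map of $\ms{M}(\Sigma \times \Sigma)$, exactly as in Lemma \ref{transfer}.

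For the first square, I would fix $f \in C_b(\Sigma)$ and chase it around both routes. Using the change-of-variables identity $\int f \, d((\pi_1)_* \mu) = \int (f \circ \pi_1)\, d\mu$ and the definition of the dual, the top-then-left route gives
\[ \int f \, d\big((\pi_1)_* L_{\log G}^* m\big) = \int L_{\log G}(f \circ \pi_1)\, dm. \]
The key computation is then
\[ L_{\log G}(f \circ \pi_1)(x,y) = \sum_{i,j} G(ix,jy)\, f(ix) = \sum_i f(ix) \sum_j G(ix, jy) = \sum_i g(ix) f(ix), \]
where the inner sum collapses by Lemma \ref{marginals}; since all terms are nonnegative the rearrangement is justified by Tonelli. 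The right-hand side is precisely $(L_{\log g} f)(x) = \big((L_{\log g} f)\circ \pi_1\big)(x,y)$, so the expression becomes $\int (L_{\log g} f)\, d((\pi_1)_* m) = \int f \, d\big(L_{\log g}^*(\pi_1)_* m\big)$, which is the left-then-bottom route. This is the entire content of the square. The argument for $\pi_2$ is identical, except that I would invoke the other marginal $\sum_i G(ix,jy) = h(jy)$, which collapses the double sum to $\sum_j h(jy) f(jy) = (L_{\log h} f)(y)$. I would note that this produces $L_{\log h}^*$ on the second factor rather than $L_{\log g}^*$ (correcting what appears to be a typo in the statement), since $\pi_2$ is the coordinate carrying the $h$-marginal.

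Finally, the ``in particular'' assertion is an immediate specialization. Since $\nu_g$ is a $g$-measure and $\nu_h$ a $g$-measure for $h$, they are conformal, i.e.\ $L_{\log g}^* \nu_g = \nu_g$ and $L_{\log h}^* \nu_h = \nu_h$. Substituting $(\pi_1)_* m = \nu_g$ into the first square yields $(\pi_1)_*(L_{\log G}^* m) = L_{\log g}^*\big((\pi_1)_* m\big) = L_{\log g}^* \nu_g = \nu_g$, and likewise $(\pi_2)_*(L_{\log G}^* m) = L_{\log h}^* \nu_h = \nu_h$. I do not anticipate a genuine obstacle here: the lemma is essentially a bookkeeping consequence of Lemma \ref{marginals}. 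The only points needing care are technical — confirming $L_{\log G}$ preserves $C_b(\Sigma\times\Sigma)$ and constants so the dual is well defined on probability measures, and the harmless sum-interchange by nonnegativity — while the sole conceptual subtlety is matching each projection with the correct marginal, $\pi_1 \leftrightarrow g$ and $\pi_2 \leftrightarrow h$, which is exactly what forces the second diagram to use $L_{\log h}^*$.
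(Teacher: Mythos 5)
Your proposal is correct and takes essentially the same route as the paper's own proof: test against an arbitrary $f \in C_b(\Sigma)$, collapse the double sum $\sum_{i,j} G(ix,jy) f(ix)$ to $\sum_i g(ix) f(ix)$ via Lemma \ref{marginals}, identify the result as $(L_{\log g} f)\circ \pi_1$, and then specialize using conformality of $\nu_g$ and $\nu_h$; your added checks (that $L_{\log G}$ preserves $C_b$ and constants, and Tonelli for the rearrangement) are harmless refinements the paper leaves implicit. You are also right that the second identity should involve $L_{\log h}^*$ rather than $L_{\log g}^*$ — the paper's statement and its ``Similarly'' line carry this typo, though the final conclusion $(\pi_2)_*(L_{\log G}^* m) = \nu_h$ is unaffected since $\nu_h$ satisfies $L_{\log h}^* \nu_h = \nu_h$.
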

		\begin{proof}
		Consider for $m \in \ms{M}(\Sigma \times \Sigma)$ and $f \in C_b(\Sigma)$, 
		\begin{align*}
		\int f \,d((\pi_1)_*(L_{\log G}^*(m))) &= \int L_{\log G} (f \circ \pi_1) \,dm. 
		\end{align*} 
		For $x,y \in \Sigma$, 
		\begin{align*}
		L_{\log G} (f \circ \pi_1)(x,y) &= \sum_{i,j} G(ix,jy) ((f \circ \pi_1)(ix,jy)) \\
		&= \sum_{i,j} G(ix,jy) f(ix) \\
		&= \sum_{i} g(ix) f(ix)~ (\text{by Lemma}~ \ref{marginals})\\
		&= L_{\log g}(f)(x) \\
		&= (L_{\log g}(f) \circ \pi_1)(x,y).
		\end{align*}
		Hence, we get that
		\begin{align*}
		\int f \,d((\pi_1)_*(L_{\log G}^*(m))) &= \int L_{\log G} (f \circ \pi_1) \,dm \\
		&= \int (L_{\log g}(f) \circ \pi_1) \,dm \\
		&= \int f \, d(L_{\log g}^* (\pi_1)_* m).
		\end{align*}
	Hence, $(\pi_1)_*(L_{\log G}^*(m))= L_{\log g}^*((\pi_1)_*(m))$. 
	
	Similarly, $(\pi_2)_*(L_{\log G}^*(m))= L_{\log g}^*((\pi_2)_*(m))$. Finally if $m$ in $\ms{M}(\Sigma \times \Sigma)$ satisfies $(\pi_1)_{*}(m)=\nu_g$, then 
	\[(\pi_1)_{*}(L_{\log G}^*m)=L_{\log g}^*((\pi_1)_*m)=L_{\log g}^*(\nu_g)=\nu_g.\]
	Similarly, we get $(\pi_2)_{*}(L_{\log G}^*m)=\nu_h$. \qedhere
	\end{proof} 
	\subsubsection{Joining} \label{joining}
	
	We now produce a joining of $\nu_g$ and $\nu_h$. For notational convenience, let $T:=L_G^*$. Then $T$ is an affine map of $\ms{M}(\Sigma \times \Sigma)$. $T$ is weak*-continuous (Lemmas \ref{dual} and \ref{transfer}). Our problem now translates to finding a fixed point for $T$. We do this by defining a tight sequence of measures. 

Let \[P_{\nu_g, \nu_h} = \lbrace m \in \ms{M}(\Sigma \times \Sigma) \colon m ~\text{has marginals}~ \nu_g ~\text{and}~ \nu_h \rbrace. \]
Then, take $m_0= \nu_g \times \nu_h \in P_{\nu_g, \nu_h}$.
Define a sequence as \[m_n= \frac{1}{n} \sum_{k=0}^{n-1} T^k(m_0).\]
Then by Lemma \ref{fixed point}, for all $n$, $m_n$ is in $P_{\nu_g, \nu_h}$.

By Lemma \ref{compactness}, there is a convergent subsequence, say $(m_{n_j})_j$. Let $\mu$ be the weak*-limit of this subsequence. We claim that $T\mu=\mu$.  Then since $\mu$ is a fixed point for the dual of the RPF operator, we get that $\mu$ is invariant for the map $\sigma \times \sigma$.

\begin{lemma} \label{fixed point}
 Let $\mu$ be the weak*-limit of a subsequence of the sequence\[m_n= \frac{1}{n} \sum_{k=0}^{n-1} T^k(m_0).\] Then $\mu$ is a fixed point for $T$. 
\end{lemma}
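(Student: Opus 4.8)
The plan is to exploit the Cesàro structure of the sequence $(m_n)$ together with the weak*-continuity of $T$ and the sup-norm properties of $L_{\log G}$ recorded in Lemma \ref{transfer}. This is the standard Krylov--Bogolyubov averaging argument, adapted to the dual operator.

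First I would record the telescoping identity. Since $T=L_{\log G}^*$ is affine, applying it to $m_n=\frac{1}{n}\sum_{k=0}^{n-1}T^k(m_0)$ gives $Tm_n=\frac{1}{n}\sum_{k=1}^{n}T^k(m_0)$, so that
\[ Tm_n-m_n=\frac{1}{n}\bigl(T^n(m_0)-m_0\bigr). \]
Because $G$ satisfies $\sum_{i,j}G(ix,jy)=1$ (a consequence of Lemma \ref{marginals}, summing either marginal), the operator $L_{\log G}$ fixes the constant function $1$, and hence $T$ maps probability measures to probability measures. In particular every $T^k(m_0)$ has total mass $1$, so the right-hand side above is $\frac{1}{n}$ times a difference of probability measures.

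Next I would test this identity against an arbitrary $f\in C_b(\Sigma\times\Sigma)$. Using the duality $\int f\,d(Tm)=\int L_{\log G}(f)\,dm$ together with the crude bound $\lvert\int f\,d\nu\rvert\le\|f\|_\infty$ valid for any probability measure $\nu$, I obtain
\[ \left\lvert \int L_{\log G}(f)\,dm_n-\int f\,dm_n \right\rvert =\frac{1}{n}\left\lvert\int f\,d(T^n m_0)-\int f\,dm_0\right\rvert \le\frac{2\|f\|_\infty}{n}. \]
I would then pass to the limit along the convergent subsequence $(m_{n_j})$ with weak*-limit $\mu$. The key input here is Lemma \ref{transfer}: $L_{\log G}(f)$ is again bounded and continuous, so weak* convergence applies to both $f$ and $L_{\log G}(f)$, yielding $\int f\,dm_{n_j}\to\int f\,d\mu$ and $\int L_{\log G}(f)\,dm_{n_j}\to\int L_{\log G}(f)\,d\mu=\int f\,d(T\mu)$. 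Combined with the bound above, whose right-hand side tends to $0$, this gives $\int f\,d(T\mu)=\int f\,d\mu$ for every $f\in C_b(\Sigma\times\Sigma)$. Since $\Sigma\times\Sigma$ is a complete separable metric space, $C_b$ is measure-determining, so $T\mu=\mu$.

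I do not expect a serious obstacle in this lemma itself; the averaging trick is routine. The one point that genuinely requires the countable-state machinery, rather than the usual compact-space reasoning, is the appeal to Lemma \ref{transfer} to guarantee that $L_{\log G}(f)\in C_b(\Sigma\times\Sigma)$: without continuity and boundedness of the image one could not apply weak* convergence on the non-compact product space. The existence of the subsequential limit $\mu$ itself is not reproved here but rests on the tightness established in the proof of Lemma \ref{compactness}, which is exactly why the sequence $(m_n)$ was constructed to lie in $P_{\nu_g,\nu_h}$.
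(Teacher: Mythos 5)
Your proof is correct, and its core is the same as the paper's: the telescoping identity $Tm_n - m_n = \frac{1}{n}\left(T^n m_0 - m_0\right)$, the resulting $O(1/n_j)$ bound (the paper gets exactly the same $2/n_j$), and the weak*-continuity inputs from Lemmas \ref{transfer} and \ref{dual}. Where you genuinely diverge is in how the limit is passed. The paper localizes: it fixes an arbitrary compact $K \subset \Sigma \times \Sigma$, builds a pseudo-metric $d_K$ from a dense sequence in $C(K)$, runs the triangle inequality $d_K(T\mu,\mu) \leq d_K(T\mu,Tm_{n_j}) + d_K(Tm_{n_j},m_{n_j}) + d_K(m_{n_j},\mu)$, concludes $T\mu|_K = \mu|_K$, and then lets $K$ vary over all compact sets. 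You instead globalize: you test the telescoping identity against every $f \in C_b(\Sigma \times \Sigma)$, use the duality $\int f \, d(Tm) = \int L_{\log G}(f)\, dm$ together with Lemma \ref{transfer} to ensure $L_{\log G}(f)$ is again an admissible test function, and finish with the fact that $C_b$ is measure-determining on a Polish space. Your route is arguably cleaner: it never extends functions off $K$, and it sidesteps a delicate point in the paper's argument --- weak* convergence controls $\int \tilde f \, dm_{n_j}$ for a continuous extension $\tilde f$ of $f \in C(K)$, whereas $d_K$ is built from the restricted integrals $\int_K f\, dm_{n_j}$, and the function equal to $\tilde f$ on $K$ and $0$ elsewhere is not continuous, so the paper's claim that $d_K(m_{n_j},\mu) \to 0$ quietly elides a gap that your argument does not have. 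What the paper's compact-set route buys is that it stays entirely within (pseudo-)metric reasoning and equality of measures on compact sets, rather than invoking the measure-determining property of $C_b$. A further point in your favor: you explicitly verify that $T$ preserves probability measures (via $\sum_j G(ix,jy) = g(ix)$ and $\sum_i g(ix) = 1$), a fact both proofs need for the $2/n_j$ bound but which the paper uses only implicitly.
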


\begin{proof}
Let $K \subset \Sigma \times \Sigma$ be compact. Then the weak*-topology on $M(K)$, the space of measures on $K$, is metrizable. One metric that induces the weak*-topology on $M(K)$ is given by the following; Let $(f_n)_n \subset C(K)$ be a dense subset with $\| f_n \|_{\infty} =1$ for all $n$. We can use this to define a metric $d_K$ by \[d_K(\mu,\nu)= \sum_{n=1}^{\infty}\frac{|\int_K f_n \, d\mu- \int_K f_n \, d\nu |}{2^n}.\]
This defines a pseudo-metric on $\ms{M}(\Sigma \times \Sigma)$ and
 
  \[d_K(T\mu,\mu) \leq d_K(T\mu,Tm_{n_j})+d_K(Tm_{n_j},m_{n_j})+d_K(m_{n_j},\mu). \]
  
Any continuous function $f$ on $K$ can be extended to a continuous function on $\Sigma \times \Sigma$, as $K$ is compact. Also, $\mu$ is the limit of the subsequence $(m_{n_j})$ in the weak*-topology on $\ms{M}(\Sigma \times \Sigma)$, so we get that $d_K(m_{n_j},\mu) \to 0$. Since $T$ is continuous, $d_K(T\mu,Tm_{n_j}) \to 0$. 

For the middle term, observe the following:
\begin{align*}
m_{n_j} &= \frac{1}{n_j} \sum_{k=0}^{n_j-1} T^k(m_0), \\
Tm_{n_j} &= \frac{1}{n_j} \sum_{k=1}^{n_j} T^k(m_0), \\
d_K(Tm_{n_j},m_{n_j}) &= d_K \left(\frac{1}{n_j} \sum_{k=0}^{n_j-1} T^k(m_0), \frac{1}{n_j} \sum_{i=1}^{n_j} T^i(m_0) \right) \\
&=  \sum_{n=1}^{\infty}\frac{\left| \int_K f_n \, d\left( \frac{1}{n_j} \sum_{k=0}^{n_j-1} T^k(m_0) \right)- \int_K f_n \, d\left( \frac{1}{n_j} \sum_{i=1}^{n_j} T^i(m_0) \right)  \right|}{2^n} \\
&= \sum_{n=1}^{\infty}\frac{\frac{1}{n_j} \left| \int_K f_n \, d\left( m_0 \right)- \int_K f_n \, d\left(  T^{n_j}(m_0) \right) \right|}{2^n} \\
&\leq  \sum_{n=1}^{\infty}  \frac{1}{2^n n_j}  \left( \left| \int_K f_n \, d\left( m_0 \right) \right| + \left| \int_K f_n \, d\left(  T^{n_j}(m_0) \right) \right| \right) \\
&=  \sum_{n=1}^{\infty}  \frac{1}{2^n n_j}  \left( m_0(K) +  T^{n_j}(m_0)(K) \right) \\
&=  \frac{1}{n_j}  \left( m_0(K) +  T^{n_j}(m_0)(K) \right) \leq \frac{2}{n_j} \to 0.
\end{align*}

Hence, $d_K(T\mu,\mu)=0$, therefore $T\mu|_K=\mu|_K$. Since the compact subset K was arbitrarily chosen, $T\mu=\mu$. \qedhere
\end{proof} 

\subsubsection{Estimating $\overline{d}(\nu_g,\nu_h)$} \label{final}
We want to get an upper bound on $\bar{d}(\nu_g,\nu_h)$. We do this by estimating $\int \delta(x,y) \, d\mu$, where 
\begin{equation*} 
\delta(x,y)=
\begin{cases} 
      1 & x_0 \neq y_0 \\
      0 & \text{otherwise}   \end{cases}
\end{equation*}
Let $\pi(x,y)= \inf \lbrace n \geq 0: x_n \neq y_n \rbrace$, then $\delta(x,y)=1$ if and only if $\pi(x,y)=0$. 

Let $(X_n)_{n \in \mathbb{Z}}$ and $(Y_n)_{n \in \mathbb{Z}}$ be stochastic processes taking values in the set of states $S$. Let $X_{n-}$ denote the sequence $(X_{n-1},X_{n-2} \dots) \in \Sigma$ and $Y_{n-}$ be similarly defined. Let the transition probabilities for the pair $(X_n,Y_n)$ be given by  \[\mathbb{P}(X_n=i, Y_n=i| X_{n-}=x, Y_{n-}=y) = G(ix,jy),\] where $G$ is as defined in equations (\ref{transition1}) and (\ref{transition2}).
			
Define a process $(Z_n)_{n \in \mathbb{Z}}$ taking values in $\mathbb{N} \cup \lbrace 0 \rbrace$ by \[Z_n=\pi(X_{n-},Y_{n-}).\]
To estimate $\int \delta(x,y) \,d\mu$ it is sufficient to estimate $\int \chi_{[0]} ~d\mu'$ where $\mu'$ is the induced measure on the $\mathbb{Z}$-valued process. For the $\mathbb{Z}$-valued process, the only valid transitions are from $n$ to $0$ or from $n$ to $n+1$.  
	
	  \begin{figure}[h!]
	  \begin{center}
	  \begin{tikzpicture}
	  \tikzset{node style/.style={state, 
	   		fill=gray!20!white,
	   		rectangle}}   
	   \node[state]               (0)   {0};
	   \node[state, right=of 0]   (1)   {1};
	   \node[state, right=of 1]   (2)   {2};
	   \node[draw=none,  right=of 2]   (2-n) {$\cdots$};
	   \node[state, right=of 2-n] (n)   {$n$};
	   \node[draw=none,  right=of n]     {$\cdots$};
	   \draw[>=latex,
	   auto=left,
	   every loop]
	   (0)   edge[loop left] node {}   (0)
	   (0)   edge[left] node {}   (1)
	   (1)  edge[left] node {}  (2)
	   (2)  edge[left] node {}  (2-n)
	   (2-n) edge[left] node {}  (n)
	   (1)   edge[bend left=60] node[above, pos=0.5, sloped] {}  (0)
	   (2)   edge[bend left=80] node[above, sloped] {}  (0) 
	   (n)   edge[bend left=100] node[above, sloped] {}  (0);
	   \end{tikzpicture}
	   \caption{Valid transitions for the $\mathbb{Z}$-valued process} 
	   \label{fig:Z process}
	   \end{center}
	   \end{figure}
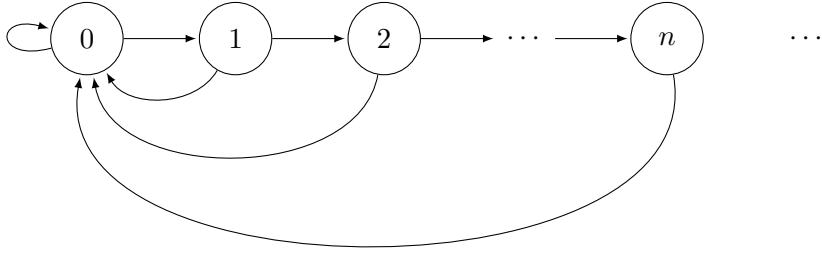

Let $\tau$ be the return time to state $0$. Since the proportion of time spent by the above process in the state 0 is the reciprocal of the expected return time to state zero, we have that \[\int \delta(x,y) \,d\mu= \int \chi_{[0]} ~d\mu' =\frac{1}{\mathbb{E}(\tau)},\]

where, $\mathbb{E}(\tau)$ the expected return time to state $0$. Now 

\[\mathbb{E}(\tau)=\sum_{k=1}^{\infty} k \mathbb{P}(\tau=k)= \sum_{k=1}^{\infty} \mathbb{P}(\tau \geq k).\]

Consider 
\begin{align*}
\mathbb{P} [\tau \geq k] &= \frac{1}{\mu(A_0)} \int_{A_0}\sum_{i_1 \dots i_k} \mathbb{P} \left( X_{n+1} \dots X_{n+k}= Y_{n+1} \dots Y_{n+k}= i_1 \dots i_k | X_{n^-} =x,  Y_{n^-}=y \right) \,d\mu(x,y) \\
&= \frac{1}{\mu(A_0)} \int_{A_0} \sum_{i_1 \dots i_k} F(i_1,x,y) F(i_2,i_1x,i_1y) \dots F(i_k , i_{k-1}\dots i_2i_1x,i_{k-1}\dots i_2i_1y) \,d\mu(x,y).
\end{align*}
Now by (\ref{upper}), we get that \[\sum_{i_j} F(i_j , i_{j-1}\dots i_2i_1x,i_{j-1}\dots i_2i_1y) \geq e^{-(V_{j+1}(g)+d(g,h))}.\]
We get that
\begin{align*}
\mathbb{P} [\tau \geq k ] 
\geq \prod_{j=1}^{k} (e^{-(V_{j+1}(g)+d(g,h))}).
\end{align*}
Hence, \begin{equation} \label{expectation} \mathbb{E}(\tau) \geq \sum_{n=1}^{\infty} \prod_{i=1}^n \left(e^{-(V_{i+1}(g)+d(h,g))}\right) =  \sum_{n=1}^{\infty}\left( \prod_{i=1}^n e^{-V_{i+1}(g)}\right) \left(e^{-d(h,g)}\right)^n.  \end{equation}

This is a power series in $e^{-d(h,g)}$. As $d(h,g) \to 0$, $e^{-d(h,g)} \to 1$. By Abel's theorem,  \[\sum_{n=1}^{\infty} \prod_{i=1}^n \left(e^{-V_{i+1}(g)}\right) (e^{-d(h,g)})^n \to \sum_{n=1}^{\infty} \prod_{i=1}^n e^{-V_{i+1}(g)}.\]
$\sum_{n=1}^{\infty} \prod_{i=1}^n e^{-V_{i+1}(g)} = \infty$ is equivalent to the condition (\ref{eqn:hypothesis}) in the hypothesis, which completes the proof. \qedhere
\end{proof}
We now get Theorem \ref{g-functions and g-measures Lipschitz} as a corollary. 
\begin{proof}[Proof of Theorem \ref{g-functions and g-measures Lipschitz}]
Let $\sum_{i=1}^{\infty} V_{i+1}(g) \leq L_g<\infty$. 
Then by equation (\ref{expectation}), we get that \[\mathbb{E}(\tau) \geq \sum_{n=1}^{\infty} e^{-L_g} (e^{-d(g,h)})^n = e^{-L_g} \frac{e^{-d(g,h)}}{1-e^{-d(g,h)}}= e^{-L_g} (e^{d(g,h)}-1)^{-1}.\]. 
If $d(g,h) \leq \ln(2)$, then $e^{d(g,h)}-1 \leq 2d(g,h)$, we get 
\begin{equation} \label{estimates} \overline{d}(\nu_g,\nu_h)\leq \mathbb{E}(\tau)^{-1}  \leq e^{L_g}(e^{d(g,h)}- 1) \leq 2e^{L_g}d(g,h). \qedhere \end{equation}
\end{proof}
In particular, if $\log(g)$ is $\theta$-weakly Hölder, then $V_k(g) \leq C_g \theta^k$ for all $k \geq 2$, then 
\[\sum_{i=1}^{\infty} V_{i+1}(g) \leq \sum_{i=1}^{\infty} C_g \theta^{i+1} = C_g \frac{\theta^2}{1-\theta}:=L_g.\]
Combining this with equation \ref{estimates}, we get that \[\overline{d}(\nu_g,\nu_h) \leq 2e^{L_g} d(g,h).\] 

\section{Potentials and $\overline{d}$-continuity} \label{potentials}
In this section, we will prove Theorem \ref{Potential and normalized potentials}. Theorem \ref{Potential and equilibrium states} then follows by combining this with Theorem \ref{g-functions and g-measures countable}. 

We prove Theorem \ref{Potential and normalized potentials} by applying the perturbation theory of linear operators to the RPF operator. The proof is based on the spectral gap property (SGP) of the Ruelle operator. For a strongly positive recurrent potential $\phi$, there exists a `rich' Banach space $\mathscr{B}$ of continuous functions on which $L_{\phi} = \lambda P_{\phi} + N_{\phi}$ where $\lambda = P_G (\phi)$ and $P_{\phi}$ is a projection whose image is a one-dimensional subspace, see definition (\ref{spectral property}). We first show that the Ruelle operator depends continuously on the potential in the operator norm coming from the Banach space. This is Lemma  \ref{operator lemma}. The continuity result follows once we show that the projection varies continuously with the potential. This is Lemma \ref{eigenfunction lemma}.

\subsection{Spectral Gap Property} \label{SGP}
We begin by first describing the Spectral Gap Property and the Banach space on which the Ruelle operator associated with a SPR potential acts with a spectral gap.  Recall that the Ruelle operator associated to potential $\phi$ is defined  as \[L_\phi(f)(x)= \sum_{\sigma y=x} e^{\phi(y)} f(y).\]
Let dom$(L_\phi)$ denote the collection of all functions $f$ such that the sum converges for all $x$ in $X$. For the following terminology and results, see \cite{CS09}. For the remainder of the discussion, we will let $\phi$ be a $\theta$-weakly Hölder continuous potential with finite Gurevich pressure.   

\begin{definition} \label{spectral property}
We say that $\phi$ has the spectral gap property (SGP) if there is a Banach space $\mathscr{B}$ of continuous functions on $X$ such that: 
\begin{enumerate}
\item $\mathscr{B}$ is ``rich" 
   \begin{enumerate} 
   \item $\mathscr{B} \subset$ dom$(L_\phi)$ and $\lbrace \mathbbm{1}_{[a]} \colon [a] \neq \emptyset \rbrace \subset \mathscr{B}$
   \item $f \in \mathscr{B} \implies |f| \in \mathscr{B}$ and $\| |f| \|_{\mathscr{B}} \leq \| f \|_{\mathscr{B}}$. 
   \item $\mathscr{B}$-convergence implies convergence on cylinders
   \end{enumerate}
\item  ``Spectral gap":  $L_\phi$ is a bounded operator on $\mathscr{B}$. Furthermore, $L_\phi=\lambda P+N$, where $\lambda= \exp(P_G(\phi)), PN=NP, P^2=P, \dim(\Image P)=1$ and the spectral radius of $N$ is less than $\lambda$.
\item ``Analytic Perturbations": If $g$ is $\theta$-Hölder, then $L_\phi \colon \mathscr{B} \to \mathscr{B}$ is bounded and $z \to L_{\phi+zg}$ is analytic on some complex neighborhood of zero.
\end{enumerate}
\end{definition}
Strong positive recurrence is a necessary and sufficient condition for the spectral gap property. 
\begin{theorem} [\cite{CS09}]
Let $X$ be a topologically mixing countable Markov shift and $\phi$ be a $\theta$-weakly Hölder continuous potential (for some $0<\theta<1$) with finite Gurevich pressure, then $\phi$ has the spectral gap property if and only if $\phi$ is strongly positive recurrent. 
\end{theorem}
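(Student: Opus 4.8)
The plan is to prove the two implications separately, using a renewal (generating-function) identity as the common bridge between the combinatorial SPR condition and the spectral data of $L_\phi$. Throughout I normalize so that $P_G(\phi)=0$, hence $\lambda=e^{P_G(\phi)}=1$; this is harmless because both SGP and SPR are invariant under adding a constant to $\phi$ (the operator rescales by $e^c$ and (\ref{SPR condition}) shifts both sides by $c$). Fix a state $a$ and set, as in Section \ref{TF for CMS}, $F(z)=\sum_{n\ge 0}Z_n(\phi,a)z^n$ with $Z_0:=1$, and $U(z)=\sum_{n\ge 1}Z_n^*(\phi,a)z^n$. Since every periodic orbit based at $a$ decomposes uniquely into a concatenation of first-return loops at $a$, we have the renewal identity $F(z)\bigl(1-U(z)\bigr)=1$, valid as an identity of power series and on the common domain of convergence. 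Because $\tfrac1n\ln Z_n(\phi,a)\to P_G(\phi)=0$, the radius of convergence of $F$ is $1$, while SPR is precisely the statement that the radius of convergence of $U$ exceeds $1$.

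For the direction \textbf{SGP $\Rightarrow$ SPR}, I would use the spectral decomposition $L_\phi^n=P+N^n$ with $\operatorname{sp}(N)=\kappa<1$. Together with the positivity supplied by the Generalized RPF theorem (the peripheral eigenvalue $1$ simple, with positive eigenfunction $h_\phi$ and eigenmeasure $\nu_\phi$), this yields the asymptotics $Z_n(\phi,a)=c+O(\kappa^n)$ for a constant $c>0$, obtained by evaluating $L_\phi^n\mathbbm{1}_{[a]}$ against the relevant cylinder and invoking Sarig's trace-type counting estimates. Consequently $F(z)=\tfrac{c}{1-z}+R(z)$ with $R$ analytic on $\{|z|<1/\kappa\}$, so $F$ is meromorphic there with a single simple pole at $z=1$. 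Then $U(z)=1-1/F(z)$ has a removable singularity at $z=1$ (the pole of $F$ becomes a zero of $1/F$) and is analytic on all of $[0,1]$, its only further positive-real singularities arising from zeros of $F$, which lie in $(1,1/\kappa)$. Since the coefficients $Z_n^*(\phi,a)$ are nonnegative, Pringsheim's theorem identifies the radius of convergence of $U$ with the distance to its nearest positive-real singularity; as none lies in $[0,1]$, this radius is strictly greater than $1$, which is exactly (\ref{SPR condition}).

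For the harder direction \textbf{SPR $\Rightarrow$ SGP}, I would first record that SPR implies positive recurrence, so the Generalized RPF theorem furnishes $h_\phi,\nu_\phi$; conjugating by the multiplication operator $f\mapsto h_\phi f$ replaces $L_\phi$ by the normalized (Markov) operator $L_{\log g_\phi}$ with $g_\phi$ as in Section \ref{g-functions}, a similarity that preserves all spectral data. I would take $\mathscr B$ to be the space of continuous functions with an appropriately weighted weakly-Hölder norm, so that the ``rich'' axioms and boundedness of $L_\phi$ hold; the decay $\var_n(\phi)\le C_\phi\theta^n$ then drives a Lasota--Yorke (Doeblin--Fortet) inequality $\|L_\phi^n f\|_{\mathscr B}\le C\theta^n\|f\|_{\mathscr B}+C_n\|f\|_{\mathrm{weak}}$. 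The role of SPR is to control the non-compact tail: passing to the induced system on $[a]$, SPR says the operator-valued first-return generating function $\mathcal U(z)=\sum_n z^n L_{\phi,a}^{(n)}$, with $L_{\phi,a}^{(n)}$ the taboo transfer operator of first-return loops of length $n$, converges in operator norm on $\mathscr B$ in a disc of radius strictly larger than $1$. Feeding this into Sarig's operator renewal equation expresses the resolvent $(I-zL_\phi)^{-1}$ through $(I-\mathcal U(z))^{-1}$ and continues it meromorphically past $|z|=1$, forcing the essential spectral radius of $L_\phi$ below $\lambda=1$; topological mixing then guarantees that the peripheral spectrum is the single simple eigenvalue $1$, yielding $L_\phi=P+N$ with $\dim(\Image P)=1$ and $\operatorname{sp}(N)<1$. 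Clause (3) (analytic perturbations) follows from analyticity of $z\mapsto L_{\phi+zg}$ in operator norm, a consequence of the same estimates.

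The main obstacle is the tail control in the SPR $\Rightarrow$ SGP direction: because $\Sigma$ is non-compact, the ``weak'' term in the Lasota--Yorke inequality is not controlled by a compact perturbation, so one cannot simply invoke Ionescu-Tulcea--Marinescu. The crux is to show that positivity of the discriminant $\Delta_a[\phi]$ translates into a quantitative gap between the spectral radius (equal to $\lambda$, governed by full loops) and the growth rate of first-return loops (the essential spectral radius). This is precisely the content of the operator renewal estimate, and it is the step that genuinely uses the strict inequality in (\ref{SPR condition}) rather than mere positive recurrence.
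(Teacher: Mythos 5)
First, a point of comparison: the paper does not prove this theorem at all --- it is quoted from Cyr--Sarig \cite{CS09}, so your proposal can only be measured against that proof, whose broad architecture (inducing at a state $a$, the discriminant, renewal-type arguments, the weighted Banach space of Section \ref{SGP}) your sketch does partially echo.

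There is, however, a genuine gap in your SGP $\Rightarrow$ SPR direction: the ``renewal identity'' $F(z)\bigl(1-U(z)\bigr)=1$ is \emph{not} an identity of power series for a general weakly H\"older potential. A periodic orbit at $a$ does decompose uniquely into first-return loops, but the weight does not factor: $e^{S_n\phi}$ of the concatenation differs from the product of the loops' individual weights by a distortion factor $e^{\pm kD}$, $D \asymp \sum_{m\ge 2}\var_m(\phi)$, one factor per loop; the identity is exact only when $\phi$ depends on finitely many coordinates. For the same reason your asymptotic $Z_n(\phi,a)=c+O(\kappa^n)$ is unjustified: comparing $Z_n(\phi,a)$ with $(L_\phi^n\mathbbm{1}_{[a]})(x)$ for $x\in[a]$ yields only two-sided multiplicative bounds $Z_n = C^{\pm 1}(L_\phi^n\mathbbm{1}_{[a]})(x)$, so $F$ need not be meromorphic at $z=1$ and the pole/zero bookkeeping for $U=1-1/F$ collapses. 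Worse, two-sided bounds of this type cannot in principle detect SPR: by the generalized RPF theorem, \emph{every} positive recurrent potential (SPR or not) already satisfies $\lambda^{-n}Z_n(\phi,a)\to \mathrm{const}>0$, so the scalar generating-function route as written cannot separate SPR from mere positive recurrence. The standard repairs are exactly what the cited literature does: either run the renewal argument at the \emph{operator} level, where composition of taboo/first-return transfer operators is exact (Sarig's operator renewal equation), or argue via the discriminant using clause (3) of Definition \ref{spectral property}: $g=\mathbbm{1}_{[a]}$ is bounded and $\theta$-H\"older, so SGP makes $t\mapsto L_{\phi+t\mathbbm{1}_{[a]}}$ analytic near $0$ and gives $P_G(\phi+t\mathbbm{1}_{[a]})<\infty$ for some $t>0$; since $\overline{\phi+t\mathbbm{1}_{[a]}}=\overline{\phi}+t$, this forces $\Delta_a[\phi]>0$, i.e.\ SPR. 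Pressures, being growth rates, are insensitive to the bounded distortion that destroys your exact identity.

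In the SPR $\Rightarrow$ SGP direction your outline has the right shape but omits the one mechanism that makes it work on a non-compact shift, and you candidly flag this yourself. A Lasota--Yorke inequality in a ``weighted weakly-H\"older norm'' with contraction $\theta^n$ is the compact-case argument; on a CMS the sup part is not dominated and the ordinary seminorm in $\theta^{t(x,y)}$ does not see the induced dynamics. The Cyr--Sarig space (reproduced in Section \ref{SGP}) is built precisely from the SPR slack: one chooses $\epsilon_a>0$ with $\theta e^{p}<1$ for $p=P_G(\overline{\phi+\epsilon_a})$, takes $h_0$ the RPF eigenfunction of the auxiliary potential $\psi=\phi+\epsilon_a-p\mathbbm{1}_{[a]}$ as the system of weights $h_0[b]$, and measures regularity in $\theta^{s_a(x,y)}$ --- the number of returns to $a$ before disagreement --- rather than $\theta^{t(x,y)}$. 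It is through $\psi$ and $s_a$ that the strict inequality in (\ref{SPR condition}) enters the functional-analytic estimates; your proposal names this as ``the crux'' but does not supply it, and the subsidiary claim that topological mixing alone trivializes the peripheral spectrum also needs an argument (aperiodicity plus the positivity structure). So the proposal is a reasonable map of the terrain, but both directions are missing their decisive steps.
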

Fix $0<\theta<1$. Let $\phi$ be a $\theta$-weakly Hölder continuous potential with finite Gurevich pressure that is strongly positive recurrent. We describe a space $\mathscr{B}$ on which $L_\phi$ acts with a spectral gap.

Fix a state $a$ such that $\Delta_a[\phi]>0$. Without loss of generality, we can assume that $P_G(\phi)=0$. We induce on the state as described in Section \ref{grpf}. Then $P_G(\overline{\phi})=0$. By strong positive recurrence, there exists $\epsilon_a>0$ such that $0<P_G(\overline{\phi+2\epsilon_a})< \infty$. Also since $t \to P_G(\overline{\phi+t})$ is continuous, we can choose $\epsilon_a>0$ small enough such that 
\begin{equation} \label{eq:epsilon} 0<\theta e^p< 1, ~\text{where}~p:=P_G(\overline{\phi+\epsilon_a}). \end{equation}
Let \begin{equation} \label{psy} \psi:= \phi+\epsilon_a-p \mathbbm{1}_{[a]}. \end{equation} Then $\psi$ is strongly positive recurrent and $P_G(\psi)=0$. By Generalized Ruelle Perron Frobenius Theorem, there exists a Borel measure $\nu_0$ which is finite and positive on cylinders, and a positive continuous function $h_0:X \to \mathbb{R}$, such that \[L_\psi^* \nu_0=\nu_0, ~ L_\psi h_0=h_0, ~\text{and}~ \int h_0 \,d\nu_0=1.\] Moreover, $\var_1[\log h_0] \leq \sum_{k \geq 2} \var_k(\phi)$. Setting $c:= \exp(\sum_{k \geq 2} \var_k(\phi))$, we see that for every $x$, \begin{equation} \label{bound} h_0(x)= c^{\pm 1} h_0[x_0],\end{equation} where $h_0[x_0] := \sup_{[x_0]} h_0$
 
Define for $x,y \in X$, 
\begin{align*}
t(x,y) &:= \min \lbrace n \colon x_n \neq y_n \rbrace ~\text{where} \min \emptyset= \infty, ~\text{and}\\
s_a(x,y) &:= \# \lbrace 0 \leq i \leq t(x,y)-1 \colon x_i=y_i=a \rbrace. 
\end{align*}

Let $\mathscr{B}$ be the collection of continuous functions $f:X \to \mathbb{C}$ for which 
\[\|f\|_{\mathscr{B}}:= \sup_{b \in S} \frac{1}{h_0[b]} \left[ \sup_{x \in [b]} |f(x)| +\sup_{x \neq y \in [b]} \left\lbrace \frac{|f(x)-f(y)|}{\theta^{s_a(x,y)}} \right\rbrace\right] < \infty.\]

$L_\phi$ acts with a spectral gap on $(\mathscr{B}, \| \cdot \|_{\mathscr{B}})$.

\subsection{Operator depends continuously on potential} 
Let $\| \cdot \|_{\theta}$ denote the Hölder norm given by $\| \phi \|_{\theta}:= \| \phi \|_{\infty}+ |\phi|_{\theta}$, where $\| \cdot \|_{\infty}$ is the uniform norm and $|\phi |_{\theta}:= \sup \lbrace |\phi(x)- \phi(y)| / \theta^{t(x,y)} \colon x,y \in X, x \neq y\rbrace$.

Let $\phi$ and $\tau$ be locally $\theta$-Hölder-continuous strongly positive recurrent potentials with finite Gurevich pressure. 
Fix a state $a$ such that $\Delta_a[\phi]>0$ and $\Delta_a[\tau]>0$ . Without loss of generality, assume that $P_G(\phi)=P_G(\tau)=0$ (otherwise we pass to $\phi-P_G(\phi)$ and $\tau-P_G(\tau)$). 
Consider 
\[\|L_{\phi}-L_{\tau} \| = \sup_{\|f\|_{\mathscr{B}}=1} \|L_{\phi}f - L_{\tau}f \|_{\mathscr{B}}. \]

\begin{lemma} \label{operator lemma}
Given $\phi$ and $\beta$ as above, for given $\epsilon >0$, there exists $\delta>0$ such that $\|L_{\phi}-L_{\tau} \|< \epsilon$, whenever $\tau \in A_{\theta}$ is such that $\|\phi-\tau\|_{\theta}<\delta$.
\end{lemma}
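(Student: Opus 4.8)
The plan is to reduce this operator-norm estimate to a \emph{multiplier estimate} on the fixed Banach space $\mathscr{B}$ on which $L_\phi$ acts with a spectral gap. The starting point is an algebraic identity obtained by factoring the exponential weights: since
\[ L_\tau f(x) = \sum_{\sigma y = x} e^{\tau(y)} f(y) = \sum_{\sigma y = x} e^{\phi(y)} e^{(\tau-\phi)(y)} f(y) = L_\phi\big(e^{\tau - \phi} f\big)(x), \]
we obtain $(L_\phi - L_\tau)f = L_\phi\big((1 - e^{\tau - \phi})f\big)$. Writing $w := 1 - e^{\tau - \phi}$, this holds for any CMS and expresses the difference operator as $L_\phi$ precomposed with multiplication by $w$. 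The advantage of factoring through $L_\phi$ rather than $L_\tau$ is that $\|L_\phi\|_{\mathscr{B}}$ is a fixed finite constant by the spectral gap property (Definition \ref{spectral property}), so the entire estimate rests on controlling the multiplication operator $f \mapsto wf$, and we avoid any circular appeal to boundedness of $L_\tau$.

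The key step is to show that multiplication by $w$ is bounded on $\mathscr{B}$ with
\[ \|wf\|_{\mathscr{B}} \le \big(\|w\|_\infty + |w|_\theta\big)\|f\|_{\mathscr{B}}, \]
where $|w|_\theta$ is the ordinary $\theta$-Hölder seminorm (defined via $t(x,y)$). I would prove this working cylinder by cylinder. For the supremum part over a cylinder $[b]$ one has $\sup_{[b]}|wf| \le \|w\|_\infty \sup_{[b]}|f|$. For the oscillation part, the Leibniz splitting $|w(x)f(x) - w(y)f(y)| \le |w(x)|\,|f(x)-f(y)| + |w(x)-w(y)|\,|f(y)|$, divided by $\theta^{s_a(x,y)}$, yields a bound by $\|w\|_\infty |f|_{[b]} + |w|_\theta \sup_{[b]}|f|$; the crucial observation here is that $s_a(x,y) \le t(x,y)$, so $\theta^{-s_a(x,y)} \le \theta^{-t(x,y)}$ and the $s_a$-weighted oscillation of $w$ is dominated by its ordinary Hölder seminorm $|w|_\theta$. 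Dividing by $h_0[b]$, regrouping, and using the elementary bounds $h_0[b]^{-1}(\sup_{[b]}|f| + |f|_{[b]}) \le \|f\|_{\mathscr{B}}$ and $h_0[b]^{-1}\sup_{[b]}|f| \le \|f\|_{\mathscr{B}}$, then taking the supremum over $b \in S$, gives the multiplier bound. In particular $wf \in \mathscr{B} \subset \mathrm{dom}(L_\phi)$, so the application of $L_\phi$ above is legitimate.

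Finally I would make $\|w\|_\infty$ and $|w|_\theta$ explicitly small in $\delta$. Setting $u := \tau - \phi$ with $\|u\|_\theta < \delta$, the estimates $|1 - e^u| \le e^{|u|} - 1$ and $|e^{u(x)} - e^{u(y)}| \le e^{\|u\|_\infty}|u(x)-u(y)|$ give $\|w\|_\infty \le e^\delta - 1$ and $|w|_\theta \le \delta e^\delta$. Combining the three ingredients and taking the supremum over the unit ball of $\mathscr{B}$,
\[ \|L_\phi - L_\tau\| \le \|L_\phi\|_{\mathscr{B}}\,\big(\|w\|_\infty + |w|_\theta\big) \le \|L_\phi\|_{\mathscr{B}}\,\big(e^\delta - 1 + \delta e^\delta\big), \]
and the right-hand side tends to $0$ as $\delta \to 0$, so for a given $\epsilon$ one simply chooses $\delta$ accordingly. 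The main obstacle, and the only place where the precise structure of $\mathscr{B}$ genuinely matters, is the multiplier estimate of the second paragraph: one must verify that the weight $h_0[b]$ factors through cleanly under multiplication and that the $s_a$-counted oscillation seminorm of $w$ is controlled by the standard Hölder data of $\tau - \phi$. The bound $s_a \le t$ is exactly what makes this work, and it is what keeps the estimate uniform across the infinitely many states.
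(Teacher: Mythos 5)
Your proof is correct, and it takes a genuinely different route from the paper's. The paper argues by direct computation: it expands $\|L_{\phi}f - L_{\tau}f\|_{\mathscr{B}}$ into the supremum part and the $s_a$-weighted oscillation part of the norm, performs an add-and-subtract decomposition inside the preimage sums, and controls each resulting infinite sum $\sum_{r \to b} e^{\phi(rx)}(\cdots)$ by hand via the estimate (\ref{calc}), which rests on the eigenfunction equation $L_{\psi}h_0 = h_0$ for the auxiliary potential $\psi$ of (\ref{psy}), the inequality $\phi \leq \psi + p$, and the comparability (\ref{bound}); it also consumes the local Hölder constant $C_{\phi}$ of $\phi$ itself. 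You instead factor $(L_{\phi}-L_{\tau})f = L_{\phi}(wf)$ with $w = 1-e^{\tau-\phi}$, invoke boundedness of $L_{\phi}$ on $\mathscr{B}$ as a black box from the spectral gap property (item (2) of Definition \ref{spectral property}, available since $\phi$ is SPR), and reduce the lemma to the multiplier bound $\|wf\|_{\mathscr{B}} \leq (\|w\|_{\infty}+|w|_{\theta})\|f\|_{\mathscr{B}}$, whose proof needs only the Leibniz splitting and $s_a(x,y) \leq t(x,y)$ --- the same inequality the paper uses at the corresponding point. Your reduction is sound: $wf$ is continuous, the multiplier estimate shows $wf \in \mathscr{B} \subset \mathrm{dom}(L_{\phi})$ so the factorization is legitimate and holds term by term, and the elementary bounds $\|w\|_{\infty} \leq e^{\delta}-1$ and $|w|_{\theta} \leq \delta e^{\delta}$ give $\|L_{\phi}-L_{\tau}\| \leq \|L_{\phi}\|_{\mathscr{B}}\left(e^{\delta}-1+\delta e^{\delta}\right) \to 0$. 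As for what each approach buys: yours is shorter, never touches $h_0$, $\psi$, or the preimage sums (all of that is hidden inside $\|L_{\phi}\|_{\mathscr{B}} < \infty$), produces a constant independent of $C_{\phi}$, and shows as a by-product that $L_{\tau}$ is bounded on the $\phi$-adapted space $\mathscr{B}$ without any spectral information about $\tau$; the paper's computation is self-contained at the level of estimates, tracing the Lipschitz constant explicitly back to $c = \exp(\sum_{k \geq 2}\var_k(\phi))$, $e^{p}$ and $C_{\phi}$ rather than to the abstract quantity $\|L_{\phi}\|_{\mathscr{B}}$ --- in effect the paper re-proves inline the piece of boundedness of $L_{\phi}$ that you outsource to \cite{CS09}.
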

\begin{proof}[Proof of Lemma \ref{operator lemma}]  
Let $f \in \mathscr{B}$ with $\| f \|_{\mathscr{B}}=1$. Now, 
\begin{multline} \label{norm} 
\|L_{\phi}f - L_{\tau}f \|_{\mathscr{B}} = \sup_{b \in S}  \frac{1}{h_0[b]} ~~ \sup_{x \in [b]} |L_{\phi}f(x)-L_{\tau}f(x)|  \\ + \sup_{x,y \in [b], x \neq y} \lbrace |L_{\phi}f(x)-L_{\tau}f(x)-L_{\phi}f(y)+L_{\tau}f(y)|/\theta^{s_a(x,y)} \rbrace. \end{multline}
Since $h_0$ is an eigenfunction for $L_{\psi}$, we have that $h_0(x)=\sum_{\sigma(y)=x} e^{\psi(y)}h_0(y)$, where $\psi$ is as given by equation (\ref{psy}). Furthermore, by the definition of the norm $\| \cdot\|_{\mathscr{L}}$ and (\ref{bound})
\begin{equation} \label{eqn:norm} |f(y)| \leq \|f\|_{\mathscr{B}} h_0[b] \leq c|h_0(y)|. \end{equation}

By definition $\psi=\phi+\epsilon_a-p\mathbbm{1}_{[a]}$, so 
\begin{equation} \label{eqn:phi} \phi \leq \psi+p. \end{equation}

For $x \in [b]$, we have the following calculation, 
\begin{align} \label{calc}
\left| \sum_{r \to b} e^{\phi(rx)}f(rx) \right|  &\leq  \sum_{r \to b} e^{\phi(rx)} |f(rx)| \nonumber \\
&\leq  \sum_{r \to b} e^{\phi(rx)}ch_0(rx) ~(\text{using}~ \ref{eqn:norm}) \nonumber \\
& \leq c e^p  \sum_{r \to b} e^{\psi(rx)}h_0(rx) ~(\text{using}~ \ref{eqn:phi}) \nonumber \\
\left| \sum_{r \to b} e^{\phi(rx)}f(rx) \right| &\leq  c e^p h_0(x) ~(\text{using}~ L_{\psi} h_0 = h_0).
\end{align}
We begin by considering the first term in the (\ref{norm}). Let $g=\tau-\phi$. Fix a state $b \in S$ and consider for $x \in [b]$
 \begin{align}
\left| \frac{L_{\phi}f(x)-L_{\tau}f(x)}{h_0(x)} \right|&= \left| \frac{\sum_{\sigma(y)=x}\left(e^{\phi(y)}-e^{\tau(y)}\right)f(y)}{h_0(x)}  \right| \nonumber \\
 &= \left|  \frac{\sum_{\sigma(y)=x}e^{\phi(y)}(1-e^{g(y)})f(y)}{h_0(x)}  \right| \nonumber \\
&\leq \frac{\sum_{\sigma(y)=x} e^{\phi(y)}\left|1-e^{g(y)}\right| |f(y)| }{h_0(x)} \nonumber  \\
&\leq \left(e^{\|\phi-\tau \|_{\infty}}-1 \right)   \frac{\sum_{\sigma(y)=x} e^{\phi(y)} |f(y)| }{h_0(x)} \nonumber  \\
&\leq c \left(e^{\|\phi-\tau \|_{\infty}}-1 \right)  e^p  ~(\text{using}~ \ref{calc}) \nonumber \\ 
&\leq c (C\| \phi - \tau \|_{\infty}) e^p ~(\text{for} ~ \|\phi-\tau\|_{\infty} <1).
\end{align}
Since $h_0[b] \geq c^{-1} h_0(x)$ for all $x \in [b]$, we get the following upper bound for the first term in equation (\ref{norm}).
\begin{equation} \label{first term}
\sup_{b \in S}  \frac{1}{h_0[b]} ~~ \sup_{x \in [b]} |L_{\phi}f(x)-L_{\tau}f(x)| \leq C \| \phi - \tau \|_{\infty} e^p.
\end{equation}
Now, we consider the second term in the (\ref{norm}). 
For $x,y $ in $[b]$ with $x \neq y$, consider  
\begin{align}
&|L_{\phi}f(x)-L_{\tau}f(x)-L_{\phi}f(y)+L_{\tau}f(y)|/\left(h_0[b]\theta^{s_a(x,y)}\right) \nonumber \\
&= \left|\frac{\sum_{r \to b} e^{\phi(rx)}f(rx)-e^{\tau(rx)}f(rx)-e^{\phi(ry)}f(ry)+e^{\tau(ry)}f(ry)}{\theta^{s_a(x,y)} h_0[b]}. \right| 
\end{align}
We add and subtract $e^{\phi(rx)}f(rx)e^{(\tau-\phi)(ry)}$ from the quantity inside the sum in the numerator, and rewrite it as follows: 
\begin{align*}
&\left|e^{\phi(rx)}f(rx)-e^{\tau(rx)}f(rx)-e^{\phi(ry)}f(ry)+e^{\tau(ry)}f(ry)\right|  \\
&= \left|e^{\phi(rx)}f(rx)(e^{(\tau-\phi)(ry)}-e^{(\tau-\phi)(rx)})+e^{\phi(rx)}f(rx)(1-e^{(\tau-\phi)(ry)})-e^{\phi(ry)}f(ry)(1-e^{(\tau-\phi)(ry)}) \right| \\
&= | e^{\phi(rx)}f(rx)(e^{(\tau-\phi)(ry)}-e^{(\tau-\phi)(rx)})-(1-e^{(\tau-\phi)(ry)})(e^{\phi(ry)}f(ry)-e^{\phi(rx)}f(rx))| 
\end{align*}
Using this along with $s_a(x,y) \leq t(x,y)$, we get that
\begin{equation} \label{estimate}
|L_{\phi}f(x)-L_{\tau}f(x)-L_{\phi}f(y)+L_{\tau}f(y)|/\left(h_0[b]\theta^{s_a(x,y)}\right)  \leq A+B
\end{equation}
where 
\begin{align*}
A &= \left|\frac{\sum_{r \to b} e^{\phi(rx)}f(rx)(e^{(\tau-\phi)(ry)}-e^{(\tau-\phi)(rx)})}{\theta^{t(x,y)} h_0[b]}\right| \\
B &= \left|\frac{\sum_{r \to b}(1-e^{(\tau-\phi)(ry)})(e^{\phi(ry)}f(ry)-e^{\phi(rx)}f(rx))}{\theta^{t(x,y)} h_0[b]}\right|.
\end{align*}
We first estimate the first term in the above expression (\ref{estimate}). Note that for $\| \tau-\phi\|_{\infty} \leq 1$, there exists $C_0$ such that 
\begin{align}  \label{holder norm}
\left| e^{(\tau-\phi)(ry)}-e^{(\tau-\phi)(rx)} \right| &\leq C_0 \left| (\tau-\phi)(ry)-(\tau-\phi)(rx) \right| \nonumber \\
&\leq C_0 \|\tau-\phi \|_{\theta} \theta^{t(x,y)} .
\end{align}
Consider, 
\begin{align*}
A & \leq C_0 \|\tau-\phi\|_{\theta} \theta^{t(x,y)}  \frac{\sum_{r \to b} e^{\phi(rx)}|f(rx)|}{\theta^{t(x,y)} h_0[b]} ~~(\text{using}~ (\ref{holder norm})) \\
& \leq C_1 \|\tau-\phi\|_{\theta} e^p ~~\left(h_0[b] = \sup_{[b]} h_0 ~ \text{and using} ~ (\ref{calc}) \right).
\end{align*} 
Next let us consider the second term in equation (\ref{estimate}). We have that 
\begin{equation} \label{a}
B \leq C_2 \|\tau-\phi\|_{\infty} \frac{ \sum_{r \to b} \left|(e^{\phi(ry)}f(ry)-e^{\phi(rx)}f(rx))\right|}{\theta^{t(x,y)} h_0[b]}.
\end{equation}
Now
\begin{align} \label{b}
\left|e^{\phi(ry)}f(ry)-e^{\phi(rx)}f(rx)\right| &= \left|e^{\phi(ry)}f(ry)-e^{\phi(rx)}f(ry)+e^{\phi(rx)}f(ry)-e^{\phi(rx)}f(rx) \right| \nonumber \\
&= \left| (e^{\phi(ry)}-e^{\phi(rx)})f(ry)+e^{\phi(rx)}(f(ry)-f(rx)) \right|.
\end{align}
Since 
\begin{equation} \label{eqn:estimates} \left| f(rx)- f(ry) \right| \leq \| f\|_{\mathscr{B}} h_0[r] \theta^{t(x,y)} \leq h_0[r] \theta^{t(x,y)}, \end{equation}
we get
\begin{align} \label{c}
\sum_{r \to b} \left|e^{\phi(rx)}(f(ry)-f(rx)) \right| &\leq \sum_{r \to b} \left| e^{\phi(rx)} h_0[r] \theta^{t(x,y)} \right| (\text{using}~ (\ref{eqn:estimates})) \nonumber\\
&\leq c \sum_{r \to b} e^{\phi(rx)} h_0(rx) \theta^{t(x,y)} ~(\text{using}~ (\ref{bound})) \nonumber \\
&\leq c \theta^{t(x,y)} e^p \sum_{r \to b} e^{\psi(rx)} h_0(rx) ~(\text{using}~ (\ref{eqn:phi})) \nonumber \\
&\leq c \theta^{t(x,y)} e^p h_0(x) ~~(h_0 = L_{\psi} h_0) \nonumber \\
&\leq c \theta^{t(x,y)} e^p h_0[b] ~~\left(h_0[b] = \sup_{[b]} h_0 \right)
\end{align}
Since $\phi$ is locally Hölder continuous, there exists $C_{\phi}$ such that for all $x,y$ and $r$, we have $|\phi(rx)-\phi(ry)| \leq C_{\phi}\theta^{t(x,y)}$. So, 
\begin{align} \label{d}
 \left| \sum_{r \to b} (e^{\phi(ry)}-e^{\phi(rx)})f(ry) \right| & \leq \sum_{r \to b} e^{\phi(ry)} |(1-e^{\phi(rx)-\phi(ry)})| |f(ry)|  \nonumber \\
&\leq c e^p \sum_{r \to b} e^{\psi(ry)} |(1-e^{\phi(rx)-\phi(ry)})|  h_0(ry)  ~(\text{similar to}~ \ref{calc}) \nonumber  \\
&\leq C_3 e^p \sum_{r \to b} e^{\psi(ry)} \left| \phi(rx)-\phi(ry) \right|  h_0(ry) \nonumber  \\
&\leq C_3 e^p C_{\phi} \theta^{t(x,y)} h_0(y) \nonumber ~(L_{\psi}h_0 = h_0) \nonumber \\
&\leq C_3 e^p C_{\phi} \theta^{t(x,y)} h_0[b]  \left(h_0[b] = \sup_{[b]} h_0 \right).
\end{align}
Combining equations (\ref{a}), (\ref{b}), (\ref{c}) and (\ref{d}), we get the following estimate on the second term in  (\ref{estimate})
\begin{equation} \label{II}
\left|\frac{\sum_{r \to b}(1-e^{(\tau-\phi)(ry)})(e^{\phi(ry)}f(ry)-e^{\phi(rx)}f(rx))}{\theta^{t(x,y)} h_0[b]} \right| \leq C_2 \|\tau-\phi\|_{\infty} \left[ c e^p + C_3 C_{\phi} \right].
\end{equation}

Combining equations (\ref{norm}), (\ref{first term}) and (\ref{II}), we get
\[\|L_{\phi}f - L_{\tau}f \|_{\mathscr{B}}  \leq C \| \phi - \tau \|_{\infty} e^p +C_1 \|\tau-\phi\|_{\theta} e^p+ C_2 \|\tau-\phi\|_{\infty} \left[ c e^p + C_3 C_{\phi} \right]. \]
This completes the proof. 
\end{proof}

\subsection{Eigenfunctions depend continuously on potential} 
Let $\phi$ and $\tau$ be locally $\theta$-Hölder-continuous strongly positive recurrent potentials with finite Gurevich pressure. 
We extend the result of this section to weakly Hölder potentials at the beginning of section \ref{completion}. Let $h_{\phi}, h_{\tau}$ be positive continuous functions given by the Generalized RPF Theorem. Then $L_{\phi}h_{\phi}= h_{\phi}$ and $L_{\tau}h_{\tau}=h_{\tau}$.  

\begin{lemma} \label{eigenfunction lemma}
Let $\phi$ be a locally Hölder, SPR potential with finite Gurevich pressure. Given $\alpha>0$, there exists $\delta>0$, such that for $\tau \in A_{\theta}$ such that $d_{\theta}(\phi,\tau) < \delta$, we get that \begin{equation} \label{eigenfunction conclusion} \left\| \frac{h_{\phi}}{h_{\phi} \circ \sigma} \frac{h_{\tau} \circ \sigma}{h_{\tau}} - 1 \right\|_{\infty} < \alpha. \end{equation} \end{lemma}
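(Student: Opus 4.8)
The plan is to read the quantity in \eqref{eigenfunction conclusion} as a coboundary built from the ratio of the two eigenfunctions and to control it through the continuity of the spectral projections. After the normalization $P_G(\phi)=P_G(\tau)=0$ the relevant eigenvalue is $\lambda=e^{P_G(\phi)}=e^{P_G(\tau)}=1$, and the spectral gap decomposition gives $L_\phi=P_\phi+N_\phi$ and $L_\tau=P_\tau+N_\tau$ on the \emph{fixed} Banach space $\mathscr{B}=\mathscr{B}_\phi$, where each $P_\bullet$ is the rank-one projection onto the eigenline $\mathbb{R}h_\bullet$. Since $\frac{h_\phi}{h_\phi\circ\sigma}\frac{h_\tau\circ\sigma}{h_\tau}$ is invariant under rescaling each eigenfunction separately, I may replace $h_\bullet$ by the canonical representatives $\tilde h_\bullet:=P_\bullet(\mathbbm{1}_{[a]})=\nu_\bullet([a])\,h_\bullet$ (legitimate because $\mathbbm{1}_{[a]}\in\mathscr{B}$ and $\nu_\bullet([a])>0$), and set $R:=\tilde h_\phi/\tilde h_\tau$; then the expression in \eqref{eigenfunction conclusion} is exactly $R(x)/R(\sigma x)$. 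Thus it suffices to prove that $R$ is uniformly close to $1$: if $\|R-1\|_\infty\le\eta$, then $|R(x)/R(\sigma x)-1|\le 2\eta/(1-\eta)$, which is $<\alpha$ once $\eta$ is small.

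The main tool is perturbation theory. By Lemma \ref{operator lemma}, $\|L_\phi-L_\tau\|_{\mathscr{B}}\to 0$ as $d_\theta(\phi,\tau)\to 0$. Since $L_\phi$ has a spectral gap, there is a circle $\Gamma$ separating the eigenvalue $1$ from the rest of the spectrum of $L_\phi$, on which the resolvent is uniformly bounded; for $\|L_\tau-L_\phi\|_{\mathscr{B}}$ small the Neumann series shows $(z-L_\tau)^{-1}$ exists and is uniformly bounded on $\Gamma$, so the Riesz projections $P_\tau=\frac{1}{2\pi i}\oint_\Gamma (z-L_\tau)^{-1}\,dz$ obey a bound $\|P_\phi-P_\tau\|_{\mathscr{B}}\le C_\phi\|L_\phi-L_\tau\|_{\mathscr{B}}$. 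Because $P_G(\tau)=0$ exactly, the genuine RPF eigenfunction $h_\tau$ sits on the eigenline enclosed by $\Gamma$, so $\tilde h_\tau=P_\tau(\mathbbm{1}_{[a]})$ really is a multiple of the RPF eigenfunction and $\|\tilde h_\phi-\tilde h_\tau\|_{\mathscr{B}}=\|(P_\phi-P_\tau)\mathbbm{1}_{[a]}\|_{\mathscr{B}}\to 0$.

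It remains to convert this $\mathscr{B}$-norm estimate into the uniform ratio bound $\|R-1\|_\infty\to 0$. By definition of $\|\cdot\|_{\mathscr{B}}$, the convergence $\|\tilde h_\phi-\tilde h_\tau\|_{\mathscr{B}}=:\epsilon_1\to 0$ yields the weighted pointwise estimate $|\tilde h_\phi(x)-\tilde h_\tau(x)|\le\epsilon_1\,h_0[x_0]$ for every $x$. To absorb the weight $h_0[x_0]$ I need a matching lower bound $\tilde h_\phi(x)\ge c_0\,h_0[x_0]$, uniform in $x$; granting this, for $\epsilon_1<c_0/2$ we get $\tilde h_\tau(x)\ge (c_0/2)\,h_0[x_0]$ and hence $|R(x)-1|=|\tilde h_\phi(x)-\tilde h_\tau(x)|/\tilde h_\tau(x)\le 2\epsilon_1/c_0$ uniformly in $x$, which closes the argument via the first paragraph.

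The hard part is precisely this uniform two-sided comparison $h_\phi\asymp h_0$ over the infinite alphabet (equivalently, that $h_\tau$ lies in $\mathscr{B}$ with norm controlled uniformly for $\tau$ near $\phi$). The upper bound $h_\phi(x)\le\|h_\phi\|_{\mathscr{B}}\,h_0[x_0]$ is immediate from $h_\phi\in\mathscr{B}$, but the lower bound cannot be produced by the compactness/finiteness argument available in the finite-alphabet setting. I expect to obtain it from a Harnack-type inequality: topological mixing of $\Sigma$ together with the bounded-distortion estimates $h_\phi(x)=C_\phi^{\pm1}h_\phi[x_0]$ and $h_0(x)=c^{\pm1}h_0[x_0]$ (the analogue of \eqref{bound}) should let me compare $h_\phi[b]$ with $h_0[b]$ across all states $b$ using the eigen-equations $L_\phi h_\phi=h_\phi$, $L_\psi h_0=h_0$ and the fact that $\psi-\phi=\epsilon_a-p\mathbbm{1}_{[a]}$ is bounded. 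Keeping these comparison constants uniform in the state, rather than letting them degenerate as the alphabet is exhausted, is the crux of the estimate.
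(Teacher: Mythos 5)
Your reduction and your perturbation step are sound and in fact run parallel to the paper's own proof: the paper also reduces (\ref{eigenfunction conclusion}) to a uniform two\nobreakdash-sided bound on ratios of eigenfunctions, and also gets $\|P_\phi-P_\tau\|$ small from Lemma \ref{operator lemma} together with analytic perturbation theory on $\mathscr{B}$. (Your implicit identification of the eigenline inside $\Gamma$ with the genuine RPF eigenfunction of $\tau$ is likewise implicit in the paper, so I do not count it against you.) The genuine gap is exactly where you flag it, and as the argument stands it is fatal: you never prove the uniform lower bound $\tilde h_\phi(x)\ge c_0\,h_0[x_0]$, and the Harnack-type route you sketch does not obviously close it. Bounded distortion only compares values of $h_\phi$ (resp.\ $h_0$) within a single $1$-cylinder; to compare $h_\phi[b]$ with $h_0[b]$ uniformly over infinitely many states $b$ you would have to iterate the eigen-equations $L_\phi h_\phi=h_\phi$ and $L_\psi h_0=h_0$, and since the potentials differ by the bounded function $\psi-\phi=\epsilon_a-p\mathbbm{1}_{[a]}$, whose Birkhoff sums grow linearly in $n$, the naive comparison constants are $e^{\pm n(\epsilon_a+p)}$ and degenerate, while mixing-based connection arguments produce constants depending on the state $b$. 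So the ``crux'' you name is real and is not resolved by your proposal.

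The paper closes precisely this hole with one more application of the same perturbation theory, not with a Harnack inequality. The key observation is that the weight $h_0[\,\cdot\,]$ appearing in $\|\cdot\|_{\mathscr{B}}$ is, up to the distortion constant $c$ of (\ref{bound}), itself the RPF eigenfunction of $L_\psi$, where $\psi=\phi+\epsilon_a-p\mathbbm{1}_{[a]}$ as in (\ref{psy}); and since $\psi-\phi$ depends only on the zeroth coordinate, $\|\psi-\phi\|_\theta\le\epsilon_a+2p$, which can be made $<\delta/2$ by shrinking $\epsilon_a$ (and hence $p$, via (\ref{eq:epsilon})). Consequently the projection estimate applies not only to the pair $(\phi,\tau)$ but also to $(\phi,\psi)$ and $(\tau,\psi)$. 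Normalizing the eigenfunctions so that $P_\psi h_\phi=P_\psi h_\tau=h_0$, one gets $\|h_\phi-h_0\|_{\mathscr{B}}\le\|P_\phi-P_\psi\|\,\|h_\phi\|_{\mathscr{B}}\le 2\gamma\|h_0\|_{\mathscr{B}}$ for $\gamma\le\tfrac12$, and then the definition of the norm together with $h_0[y_0]\le c\,h_0(y)$ turns this into the pointwise two-sided bound $|h_\phi(y)/h_0(y)-1|\le 2c\gamma\|h_0\|_{\mathscr{B}}$, and likewise for $h_\tau$. That is exactly the comparison $h_\phi\asymp h_0\asymp h_\tau$ you were missing (your $c_0$ can be taken to be $(1-2c\gamma\|h_0\|_{\mathscr{B}})/c$), and the lemma follows by taking ratios. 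If you want to repair your write-up with minimal change, replace the hoped-for Harnack inequality by this device: treat $\psi$ as a third, $d_\theta$-close potential whose eigenfunction is the weight itself, and compare both $h_\phi$ and $h_\tau$ to $h_0$ rather than only to each other.
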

Note that (\ref{eigenfunction conclusion}) then holds for all scalar multiples of $h_{\phi}$ and $h_{\tau}$. 
\begin{proof}[Proof of Lemma \ref{eigenfunction lemma}]
Let $\ms{B}$ be the Banach space on which $L_{\phi}$ acts with a spectral gap. According to the theory of analytic perturbations of linear operators, there exists an $\epsilon>0$ such that every $L: \mathscr{B} \to \mathscr{B}$ which satisfies $\| L - L_{\phi} \| < \epsilon$ has a decomposition of the form $L=\lambda(L) P(L)+N(L)$, where $P(L)$ and $N(L)$  are bounded operators such that $P(L)^2 =P(L), P(L)N(L)=N(L)P(L), \dim(Im P(L))=1$ and the spectral radius of $N(L)$ is less than $\lambda(L)$. Moreover, if $\epsilon > 0$ is small enough, then $L \to P(L), N(L), \lambda(L)$ are analytic on $\lbrace L \colon \|L-L_{\phi} \|< \epsilon \rbrace$. 

By lemma (\ref{operator lemma}), there exists $\delta >0$, such that for $\|\phi-\rho\|_{\theta} < \delta$, we get that $\|L_{\phi}-L_{\rho}\| < \epsilon$, so $L_{\rho}=\lambda_{\rho} P_{\rho}+ N_{\rho}$, where $\rho \to P_{\rho}$ is continuous. 

Let $\gamma>0$ be given, then there exists $\delta>0$ such that for $\|\rho-\phi\|_{\theta}< \delta$, $\| P_{\rho}-P_{\phi}\|<\frac{\gamma}{2}$. Let $\tau$ be such that $\|\tau-\phi\|_{\theta}< \frac{\delta}{2}$.

By equation (\ref{eq:epsilon}), $p \to 0$ as $\epsilon_a \to 0$. Further, by equation (\ref{psy}), $\phi-p \leq \psi \leq \phi+\epsilon_a$, so if take $\epsilon_a, p \in (0, \frac{\delta}{4})$, we get that $\| \psi - \phi \|_{\infty}< \frac{\delta}{2}$. Moreover, $\psi-\phi = \epsilon_a-p \mathbbm{1}_{[a]}$, so for all $n \geq 1, \var_n(\psi-\phi)=0$. So, $\| \psi - \phi \|_{\theta}< \frac{\delta}{2}$. We get that $\| P_{\psi}-P_{\phi}\|<\frac{\gamma}{2}$. Further, $\| \psi -\tau\|_{\theta} <\delta$ and $\| P_{\psi}- P_{\tau} \| < \gamma$.

Normalize so that the corresponding eigenfunctions $h_{\tau} $ and $h_{\phi}$ satisfy $P_{\psi} h_{\phi}= P_{\psi} h_{\tau}= h_0$. Then 
\begin{align*}
\|h_{\phi}  \|_{\mathscr{B}} - \|h_0 \|_{\mathscr{B}} \leq \| h_{\phi} - h_0 \|_{\mathscr{B}} &= \| P_{\phi} h_{\phi}-P_{\psi} h_{\phi}  \|_{\mathscr{B}} \\
&\leq \| P_{\phi} - P_{\psi} \| \| h_{\phi} \|_{\mathscr{B}} < \gamma \| h_{\phi} \|_{\mathscr{B}} \\
\implies \|h_{\phi}  \|_{\mathscr{B}} &\leq \frac{1}{1-\gamma} \|h_0  \|_{\mathscr{B}}.
\end{align*}

Similarly \[ \|h_{\tau}  \|_{\mathscr{B}} \leq \frac{1}{1-\gamma} \|h_0  \|_{\mathscr{B}}.\]

We want to show that $\left\| \frac{h_{\phi}}{h_{\phi} \circ \sigma} \frac{h_{\tau} \circ \sigma}{h_{\tau}}-1 \right\|_{\infty}$ is small.
For $y \in \Sigma$ and for $\gamma \leq \frac{1}{2}$, we have that

\begin{align*}
\left| \left(\frac{h_\phi}{h_0} -1 \right)(y) \right| &= \left| \frac{h_{\phi}(y)-h_{0}(y)}{h_0(y)} \right| \\
&\leq c \|h_{\phi}-h_0 \|_{\mathscr{B}} \\
&\leq c \gamma \|h_{\phi}\|_{\mathscr{B}} \\
&\leq c \frac{\gamma}{1-\gamma} \|h_0\|_{\mathscr{B}} \\
&\leq 2\gamma c \|h_0\|_{\mathscr{B}}.
\end{align*}

So, \[h_0(y)\left(1-2c\gamma \|h_0\|_{\mathscr{B}} \right) \leq h_{\phi}(y) \leq h_0(y)\left(1+2c \gamma \|h_0\|_{\mathscr{B}} \right). \]
Similarly, \[h_0(y)\left(1-2c\gamma \|h_0\|_{\mathscr{B}} \right) \leq h_{\tau}(y) \leq h_0(y)\left(1+2c \gamma \|h_0\|_{\mathscr{B}} \right). \]

Given $1>\alpha>0$, choose $\beta_1>0$ s.t. $\left(\frac{1+\beta_1}{1-\beta_1}\right)^2=1+\alpha$ and $\beta_2>0$ such that $1-\alpha=\left(\frac{1-\beta_2}{1+\beta_2}\right)^2$. Now choose $\gamma \in \left(0,\frac{1}{2} \right)$ such that $2c\gamma \|h_0\|_{\mathscr{B}}< \min(\beta_1,\beta_2)$. 

Consider 
\begin{align*}
\frac{h_{\phi}(y)}{h_{\tau}(y)}  &\leq \frac{\left(1+2c \gamma \|h_{0} \|_{\mathscr{B}} \right)}{\left(1-2c \gamma \|h_{0} \|_{\mathscr{B}} \right)}  \\
 & \leq \frac{1+\beta_1}{1-\beta_1}.
\end{align*}
Similarly, $\frac{h_{\tau}(y)}{h_{\phi}(y)} \leq \frac{1+\beta_1}{1-\beta_1}$.

Hence, we get that \[\frac{h_{\phi}(y)}{h_{\phi} \circ \sigma (y)} \frac{h_{\tau} \circ \sigma(y)}{h_{\tau(y)}} \leq  \left(\frac{1+\beta_1}{1-\beta_1}\right)^2 = 1+\alpha.\]

Also, \begin{align*}
\frac{h_{\phi}(y)}{h_{\tau}(y)} &\geq \frac{\left(1-2c \gamma \|h_{0} \|_{\mathscr{B}} \right)}{\left(1+2c \gamma \|h_{0} \|_{\mathscr{B}} \right)}  \\
 & \geq \frac{1-\beta_2}{1+\beta_2}.
\end{align*}
Similarly, $\frac{h_{\tau}(y)}{h_{\phi}(y)} \geq \frac{1-\beta_2}{1+\beta_2}$.

So, we get that \[\frac{h_{\phi}(y)}{h_{\phi} \circ \sigma (y)} \frac{h_{\tau} \circ \sigma(y)}{h_{\tau(y)}} \geq  \left(\frac{1-\beta_2}{1+\beta_2}\right)^2 = 1-\alpha.\] 

Hence, for all $y$ in $\Sigma$, \[ \left| \frac{h_{\phi}(y)}{h_{\phi} \circ \sigma (y)} \frac{h_{\tau} \circ \sigma(y)}{h_{\tau(y)}} - 1 \right| \leq \alpha. \]

Therefore, \[ \left\| \frac{h_{\phi}}{h_{\phi} \circ \sigma} \frac{h_{\tau} \circ \sigma}{h_{\tau}} - 1 \right\|_{\infty} < \alpha. \qedhere \] 
\end{proof}

\subsection{Completion of proof} \label{completion}
We begin by extending Lemma \ref{eigenfunction lemma} to weakly Hölder potentials. Let $\tilde{S}=S \times S$ and define $\tilde{\pi}: S^{\mathbb{N} \cup \lbrace 0 \rbrace} \to \tilde{S}^{\mathbb{N} \cup \lbrace 0 \rbrace}$ by \[\tilde{\pi}(x_0 x_1 x_2 \dots)= (x_0,x_1)(x_1,x_2) \dots \]
Let $\tilde{\Sigma}= \tilde{\pi}(\Sigma)$. Then $\tilde{\pi}$ is a topological conjugacy from $(\Sigma, \sigma)$ to $(\tilde{\Sigma}, \sigma)$. Given a potential $\phi : \Sigma \to \mathbb{R}$, let $\tilde{\phi}=\phi \circ \tilde{\pi}: \tilde{\Sigma} \to \mathbb{R}$. 
If $\tilde{x}, \tilde{y} \in \tilde{\Sigma}$ are such that $\tilde{x_1} \dots \tilde{x_k}=  \tilde{y_1} \dots \tilde{y_k}$, $x=\tilde{\pi}(\tilde{x})$ and $y=\tilde{\pi}(\tilde{y})$, we get that $x_1 \dots x_{k+1}= y_1 \dots y_{k+1}$. So, 
\[|\tilde{\phi}(\tilde{x})-\tilde{\phi}(\tilde{y})| = |\phi(x)-\phi(y)| \leq \var_{k+1}(\phi).\]
Therefore, $\var_{k}(\tilde{\phi}) \leq \var_{k+1}(\phi)$. Thus, if $\phi$ is weakly Hölder continuous, then $\tilde{\phi}$ is locally Hölder continuous. Further, if $L_{\phi}h_{\phi}= \lambda h_{\phi}$, we get that 
\begin{align*}
L_{\tilde{\phi}} \tilde{h}(\tilde{x}) &= \sum_{r \to x_0} e^{\tilde{\phi}((r,x_0)\tilde{x})} \tilde{h}((r,x_0)\tilde{x}) \\
&= \sum_{r \to x_0} e^{\phi(rx)} h(rx) =L_{\phi} h(x) \\
&= \lambda h(x) = \lambda \tilde{h}(x). 
\end{align*}
So, the conclusion of Lemma \ref{eigenfunction lemma} holds for $h_{\tilde{\phi}}$ and $h_{\tilde{\tau}}$, if it holds for $h_{\phi}$ and $h_{\tau}$. Hence, Lemma \ref{eigenfunction lemma} holds for weakly Hölder potentials.

Finally, to complete the proof of Theorem \ref{Potential and normalized potentials}, consider the $g$-functions given by $g_{\phi}= \frac{e^{\phi}h_{\phi}}{ h_{\phi} \circ \sigma}$ and $g_{\tau}= \frac{e^{\tau}h_{\tau}}{ h_{\tau} \circ \sigma}$. We have
\[\left\| \log(g_{\phi})- \log(g_{\tau})\right\|_{\infty} \leq \|\phi-\tau\|_{\infty} + \left\|  \log \left( \frac{h_{\phi}}{h_{\phi} \circ \sigma} \frac{h_{\tau} \circ \sigma}{h_{\tau}} \right)  \right\|_{\infty}. \]

It follows from Lemma (\ref{eigenfunction lemma}) that $\left\| \log(g_{\phi})- \log(g_{\tau})\right\|_{\infty} \to 0$ as $d_{\theta}(\phi,\tau) \to 0$.

Since $\log(g_{\phi})$ and $\log(g_{\tau})$ have summable variations, so $g_{\phi}$ and $g_{\tau}$ satisfy the condition of Theorem \ref{g-functions and g-measures countable}. The RPF measures $m_{\phi}$ and $m_{\tau}$ are \emph{g}-measures corresponding to $g_{\phi}$ and $g_{\tau}$. This completes the proof of Theorem \ref{Potential and equilibrium states}.

\newpage

\bibliographystyle{amsalpha}
\bibliography{references1}

\end{document}